\newtheorem{thm}{Theorem}
\newtheorem{lem}{Lemma}
\newtheorem{cor}{Corollary}
\newtheorem{defn}{Definition}
\newtheorem{ass}{Assumption}
\newtheorem{rem}{Remark}
\newcommand{\sabs}[1]{\left|#1\right|}
\newcommand{\sparen}[1]{\left(#1\right)}
\newcommand{\norm}[1]{\sabs{\sabs{#1}}}
\numberwithin{equation}{section}
\title{Stability for Time Dependent X-ray Transforms and Applications}
\author{Alden Waters\thanks{Department of Mathematics, University College London, Gower Street, London, WC1E 6BT, United Kingdom}} 
\begin{document}

\maketitle

\begin{abstract}
 We prove a logarithmic stability estimate for the time dependent  X-ray transform on $\mathbb{R}_t^+\times\mathbb{R}^n$. To do so, we extend a known result by Begmatov for the stability of the time dependent X-ray transform in $\mathbb{R}^+_t\times\mathbb{R}^2$. We give some examples of stability and injectivity results in relationship to the Dirichlet-to-Neumann problem. In particular, under the Geometric Control Condtion, we derive inverse logarithmic stability estimates for time dependent conformal factors. 
\end{abstract}
\section{Motivation}

The X-ray transform is a key ingredient in electrical impedance tomography (EIT). However, inverting it by the boundary control method introduced in \cite{Belishev}, \cite{kurylev} is difficult and non-constructive. Typically data in the form of the Dirichlet-to-Neumann map is measured on the exterior of an obstacle, and this is the only information available to scientists who wish to determine the source uniquely. Determining which subsets of the boundary  determine the source of the emitted waves has applications in medical imaging.\\

In this article we consider data measurements on the entirety of the boundary, in a time dependent setting. The results of \cite{waterss} prove that the X-ray transform is recoverable from the boundary measurements for wave equation lower order terms. We obtain a stability estimate for the X-ray transform for certain types of strictly convex domains, so we can reliably recover the sources from boundary data. The time dependent nature of the problem adds new difficulties to the analysis.  \\

The approach of using the X-ray or Radon transforms to recover terms in the wave equation is not new. For time independent coefficients uniqueness was examined in \cite{isakov1}, \cite{rakesh}, \cite{rakeshsymes} and stability in \cite{baoyun}, \cite{DDSF}, \cite{M}, \cite{stefanovstability}, \cite{stefanovstable}. For lower order time dependent coefficients, uniqueness was examined in \cite{stefanov1}, as well as in \cite{sjo}, \cite{s1}, and stability in \cite{s2}. Using the boundary control method under genric hypotheses, stability was also examined in \cite{LO}.  The constructive methodology should be useful in applications to medical imaging. The time independent X-ray transform has already been shown to be useful in this area.\\

The outline of this paper is as follows. Sections 2 and 3 introduce the relevant notation and prove a stability estimate for the time-dependent geodesic X-ray transform in the case of a strictly convex body in $\mathbb{R}^n$. In Section 4 we extend this stability estimate to the case when the manifold is simple, i.e.~manifolds in which the exponential map is a global diffeomorphism. Following \cite{DDSF} and \cite{waterss} we then derive stability estimates for the X-ray transform of time dependent potentials and time dependent conformal factors. \\

Finally, we show that a combination of the main theorems yields the first inverse logarithmic stability estimate for the time dependent conformal factors. This result says the high-frequencies are generically unstable. The assumptions on the conformal factors relate to the geometric control condition of Bardos-Lebeau-Rauch \cite{blr}. The same assumptions have been used in \cite{eskin2}, \cite{eskin1}  to show that certain lower order terms are uniquely determined. For the leading terms the only uniqueness result is \cite{esn}. However, there the use of analyticity in the time variable is essential, since the arguments are predicated on a unique continuation argument from \cite{tataru}. It is unclear from the results in this paper if uniqueness is available without the use of analyticity assumptions. 

\section{Introduction to notational conventions}
The Einstein summation convention is used throughout this article. We write $f \sim g$ if there exists a constant $C>0$ such that $C^{-1}f \leq g \leq Cf$.\\

Let $\mathcal{M}$ be a smooth manifold with boundary and denote by $(\frac{\partial}{\partial x^1}, \dots ,\frac{\partial}{\partial x^n})$ a basis of tangent vector fields. The inner product and norm on the tangent space $T_x\mathcal{M}$ are denoted by 
$$g(X,Y)=\langle X,Y\rangle_g=g_{jk}\alpha_j\beta_k, \qquad |X|_g=\langle X,X \rangle_g^{\frac{1}{2}}, $$
where $X=\alpha_i\frac{\partial}{\partial x_i}$, $Y=\beta_i\frac{\partial}{\partial x_i}$.
The gradient of $f \in C^1(\mathcal{M})$ is defined as the vector field $\nabla_g f$ such that 
\begin{align*}
X(f)=\langle \nabla_g f,X\rangle_g
\end{align*}
for all vector fields $X$ on $\mathcal{M}$. In local coordinates, we can write
\begin{align*}
\nabla_g f=g^{ij}\frac{\partial f}{\partial x_i}\frac{\partial}{\partial x_j}.
\end{align*}
The metric tensor induces a Riemannian volume form, which we denote by 
\begin{align*}
d_gV=(\det g)^{\frac{1}{2}}dx_1\wedge . . . \wedge dx_n.
\end{align*}
The space $L^2(\mathcal{M})$ is the completion of $C^{\infty}(\mathcal{M})$ with respect to the inner product
\begin{align*}
\langle f_1,f_2\rangle=\int\limits_{\mathcal{M}}f_1(x)f_2(x)\,d_gV\ , \qquad f_1,f_2\in \mathcal{C}^{\infty}(\mathcal{M}).
\end{align*}
Sobolev spaces are defined analogously to the Euclidean Sobolev spaces, with the norm 
\begin{align*}
\norm{f}^2_{H^1(\mathcal{M})}=\norm{f}_{L^2(\mathcal{M})}^2+\norm{\nabla f}_{L^2(\mathcal{M})}^2.
\end{align*}
We now introduce the definition of the X-ray transform on $\mathcal{M}$. For $x\in \mathcal{M}$ and $\omega\in T_x\mathcal{M}$ we write $\gamma_{x,\omega}$ for the unique geodesic with initial conditions
\begin{align*}
\gamma_{x,\omega}(0)=x, \qquad \dot{\gamma}_{x,\omega}(0)=\omega.
\end{align*}
We let 
\begin{align}
\mathcal{SM}=\{(x,\omega)\in T\mathcal{M};\quad |\omega|_g=1\}
\end{align}
denote the sphere bundle of $\mathcal{M}$. The submanifold of inner vectors of $\mathcal{SM}$ is given by
\begin{align*}
\partial_{+}\mathcal{SM}=\{(x,\omega)\in \mathcal{SM},\quad x\in \partial\mathcal{M},\quad +\langle\omega,\nu(x)\rangle<0\}.
\end{align*}
Let $\tau(x,\omega)$ be the length of the geodesic segment with initial conditions $(x,\omega)\in\partial_+\mathcal{SM}$. We consider the inward pointing vectors of $\mathcal{SM}$ only.\\ 

The construction of the Gaussian beam Ansatz requires us to make a further assumption on $\mathcal{M}$:

\begin{ass} \label{geoass} There is a $T>0$ such that the following holds: For all $(x,\omega)\in\partial_+\mathcal{SM}$ there is a $\tau(x,\omega)\leq T$ such that $\gamma_{x,\omega}(t)$ is in the interior of $\mathcal{M}$ for $0<t<\tau(x,\omega)$, and intersects the boundary $\partial\mathcal{M}$ transversally when $t=\tau(x,\omega)$.
\end{ass}
This hypothesis is the weakest assumption on the geometry for the Gaussian beam Ansatz to be well-defined in our setting. It is related to the geometric control condition of \cite{blr}. This was also observed in \cite{bao}. 

We now define the X-ray transform of a time dependent, sufficiently smooth function $f(t,x)$ as follows:
\begin{align}\label{Xray}
I_{x,\omega} f=\int\limits_0^{\tau(x,\omega)}f(s,\gamma_{x,\omega}(s))\,ds.
\end{align}
The right hand side of (\ref{Xray}) is a smooth function on the space $\partial_{+}\mathcal{SM}$, because $\tau(x,\omega)$ is a smooth function on $\partial_+\mathcal{SM}$.

\section{Stability Estimate in $\mathbb{R}^n$}
In this section we prove a stability estimate in $\mathbb{R}_t^+\times \mathbb{R}^n$, by extending the result of \cite{bg} in $\mathbb{R}_t^+\times\mathbb{R}^2$.  We prove the following theorem:
\begin{thm}\label{theoremone}
Let $\mathcal{U}\subset\mathbb{R}^n$ be bounded and strictly convex. Then there exists $\epsilon_0 \in (0,1)$ such that for all $f\in C^{\infty}([0,T]\times\mathcal{U})$ with 
\begin{align*}
\norm{If(t,x)}_{C^0(\partial_+\mathcal{S}\mathcal{U})}<\epsilon_0
\end{align*}
we have
\begin{align*}
\norm{f(t,x)}_{L^2([0,T]\times\mathcal{U})}\leq C\sparen{\log \norm{If(t,x)}_{C^0(\partial_+\mathcal{S}\mathcal{U}^+)}^{-1}}^{-1}.
\end{align*}
Here the constant $C$ depends only on $T$, diam$(\mathcal{U})$ and the $C^{n+3}$ norm of $f$. 
\end{thm}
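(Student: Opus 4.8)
The plan is to prove Theorem~\ref{theoremone} as a conditional (logarithmic) stability estimate assembled from three pieces: a direct and stable relation between the data $If$ and the spacetime Fourier transform $\hat f(\tau,\xi)$ of $f$, a one-dimensional analytic continuation in the time frequency, and a high-frequency cutoff supplied by the a priori $C^{n+3}$ bound. Since $\mathcal U$ is bounded and strictly convex, in $\mathbb R^n$ every line meets $\partial\mathcal U$ transversally in exactly two points, so $\tau(x,\omega)$ is smooth on $\partial_+\mathcal{SU}$ and Assumption~\ref{geoass} holds; extending $f$ by zero outside $[0,T]\times\mathcal U$, I may regard each fibre integral in (\ref{Xray}) as an integral over all of $\mathbb R_s$. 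The compact support of $f$ then makes $\hat f$ entire of exponential type by Paley--Wiener, and it is this analyticity, together with the fact that the transform sees only a cone of frequencies, that forces the logarithm.

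First I would carry out the projection-slice step. Writing a light ray as $s\mapsto(s,x+s\omega)$ and substituting the Fourier inversion of $f$ into (\ref{Xray}), the $s$-integral produces a factor supported on $\{\tau+\omega\cdot\xi=0\}$, so that $If$ determines $\hat f(\tau,\xi)$ on the spacelike cone $\{|\tau|\le|\xi|\}$: for each fixed $\xi$, letting $\omega$ range over $S^{n-1}$ recovers $\hat f(\tau,\xi)$ for all $\tau\in[-|\xi|,|\xi|]$. I would make this quantitative, bounding $\hat f$ and finitely many of its derivatives on the cone (up to Jacobian and diam$(\mathcal U)$ factors) by $\|If\|_{C^0(\partial_+\mathcal{SU})}=:\epsilon$. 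Strict convexity and Assumption~\ref{geoass} enter here, guaranteeing smoothness of $\tau$ and nondegeneracy of the parametrization of lines by $(x,\omega)$.

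The crux, which I expect to be the main obstacle, is that the transform is blind to the timelike frequencies $\{|\tau|>|\xi|\}$, and these must be reconstructed from the spacelike data by analytic continuation. For each fixed $\xi$ the compact support of $f$ in $t$ makes $\tau\mapsto\hat f(\tau,\xi)$ entire of exponential type $T$, known to be of size $\epsilon$ on the interval $[-|\xi|,|\xi|]$ and of a priori size $M\sim\|f\|_{C^{n+3}}$ elsewhere. A Hadamard three-lines / harmonic-measure estimate then bounds $\hat f(\tau,\xi)$ at real $\tau$ with $|\tau|\le R$ by $\epsilon^{\theta}(Me^{cR})^{1-\theta}$, where $\theta\in(0,1)$ is a fixed fraction coming from the harmonic measure of the good interval and the factor $e^{cR}$, with $c$ governed by $T$ and diam$(\mathcal U)$, reflects the exponential type. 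The real difficulty is performing this continuation uniformly in $\xi$ and tracking the dependence of $\theta$, $c$ and $M$ on $T$, diam$(\mathcal U)$ and $\|f\|_{C^{n+3}}$; this quantitative continuation, carried out for all $\xi\in\mathbb R^n$ rather than in a single transversal direction, is the substantive extension of Begmatov's two-dimensional argument.

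Finally I would assemble the estimate by Plancherel. Splitting $\|f\|_{L^2}^2=c_n\|\hat f\|_{L^2}^2$ at frequency $R$, the tail $\{|(\tau,\xi)|>R\}$ is controlled by $R^{-(n+5)}\|f\|_{C^{n+3}}^2$ from the decay of $\hat f$, while on $\{|(\tau,\xi)|\le R\}$ the continuation estimate gives a contribution of order $\epsilon^{2\theta}e^{2cR}$ up to polynomial factors. Choosing $R\sim\tfrac{\theta}{2c}\log(1/\epsilon)$ keeps the exponentially amplified data term bounded by a positive power of $\epsilon$, so the tail dominates and yields $\|f\|_{L^2}\le C(\log(1/\epsilon))^{-(n+5)/2}\le C(\log(1/\epsilon))^{-1}$, which is the claimed bound; the threshold $\epsilon_0$ is chosen so that $R$ is large and $\log(1/\epsilon)\ge 1$ throughout.
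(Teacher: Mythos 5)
Your overall architecture is the same as the paper's: the projection--slice identity $\mathcal{F}_{x\to\xi}(If)(\xi,\omega)=\hat f(-\omega\cdot\xi,\xi)$ gives $\hat f$ on the cone $\{|\tau|\le|\xi|\}$ directly from the data, an analytic continuation fills in $\{|\tau|>|\xi|\}$ at the price of a factor $e^{cR}$, the $C^{n+3}$ bound kills the tail $|(\tau,\xi)|>R$, and $R\sim\log(1/\delta)$ balances the terms (this is exactly the paper's Lemma \ref{lemmamain} followed by the optimization in $R$). The gap is in the continuation step. You propose to continue in $\tau$ at fixed $\xi$, using compact support in $t$ for the exponential type and claiming a two-constants bound $\epsilon^{\theta}(Me^{cR})^{1-\theta}$ with ``a fixed fraction'' $\theta$. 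That cannot work uniformly in $\xi$: for fixed $\xi$ the data control $\hat f(\cdot,\xi)$ only on the bounded interval $[-|\xi|,|\xi|]$, whose length you do not get to choose. At $\xi=0$ this interval is a single point and no continuation is possible at all; for $0<|\xi|\ll R$, the harmonic measure of $[-|\xi|,|\xi|]$ seen from a real point $\tau$ with $|\tau|\sim R$, inside any strip whose width is dictated by the exponential type $T$, is not bounded below (it decays as $|\tau|/|\xi|$ grows), so $\theta=\theta(\tau,\xi)\to 0$ precisely in the blind region near the $\tau$-axis that the continuation is supposed to recover. The resulting bound degenerates there and the final Plancherel assembly does not close.

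The paper avoids this by continuing in a \emph{spatial} frequency variable instead (Lemma \ref{two}): fix $\tau$ and $\xi'$ and complexify $\xi_1$, using the compact support of $f$ in $x$ (so the exponential type is $\mathrm{diam}(\mathcal U)$, via Theorem \ref{rudin}). The ``good'' set is then the pair of unbounded rays $\{|\xi_1|\ge|\tau|,\ \Im\xi_1=0\}$, which lie in the cone controlled by the data, and the strip is taken with half-width exactly $|\tau|$. This configuration is scale-invariant, so Begmatov's Lemma \ref{bg} gives a harmonic measure uniformly comparable to $1$, the three-lines exponent is the fixed value $2/3$, and the Paley--Wiener growth on the lines $\Im\xi_1=\pm|\tau|$ produces the uniform loss $e^{|\tau|/3}|\tau|^{-1/3}$. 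If you replace your $\tau$-continuation by this $\xi_1$-continuation, the rest of your argument (the tail bound from the a priori regularity with $a=n+2$, and the choice $R\sim\log(1/\delta)$) goes through and reproduces the paper's proof; note also that the paper actually bounds $\norm{f}_{C^0}$ via $L^1$ Fourier inversion rather than $\norm{f}_{L^2}$ via Plancherel, which is a harmless strengthening on the bounded set $[0,T]\times\mathcal U$.
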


Theorem \ref{theoremone} will be a consequence of the following lemma. 
\begin{lem}\label{lemmamain}
Let $\mathcal{U}\subset\mathbb{R}^n$ be bounded and strictly convex. Then for every $R>1$ and all $f\in C_c^{\infty}([0,T]\times\mathbb{R}^n)$:
\begin{align*}
&\norm{f(t,x)}_{C^0([0,T]\times\mathcal{U})}\leq C_1R^{n+1}\norm{If(t,x)}_{C^{0}(\partial_+\mathcal{S}\mathcal{U})}+\\& \qquad \qquad \qquad \qquad  C_2\exp\sparen{\textstyle{\frac{1}{3}}R}R^{\frac{3n+2}{3}}\norm{If(t,x)}_{C^{0}(\partial_+\mathcal{S}\mathcal{U})}^{\frac{2}{3}}+C_3R^{n+1-a}.
\end{align*}
Here $C_1,C_2,C_3$ are constants that depend on $T$, on diam$(\mathcal{U})$ and $a>n+1$. $C_3$ also depends on the $C^{a+1}([0,T]\times\mathcal{U})$ norm and the size of the support of $f$.
\end{lem}
\begin{proof}[Proof of Theorem \ref{theoremone}]
We set $\delta=\norm{If(t,x)}_{C^{0}(\partial_+\mathcal{SU})}$ and extend $f$ smoothly to $\mathbb{R}^n$. From Lemma \ref{lemmamain} we have
\begin{align}\label{rbound2}
&\norm{f(t,x)}_{C^0([0,T]\times\mathcal{U})}\leq  (C_1+C_2)\exp\sparen{\textstyle{\frac{1}{3}}R}R^{n+1}\delta^{\frac{2}{3}}+C_3R^{n+1-a}.
\end{align}
We would like the first term to satisfy the inequality
\begin{align*}
R^{n+1}\exp\sparen{\textstyle{\frac{R}{3}}}\leq\delta^{\frac{\epsilon}{2}},
\end{align*}
where $\epsilon\in (0,1)$. Taking the logarithm of both sides, this implies
\begin{align*}
(n+1)\log R+\frac{R}{3}\leq \log(\delta^{\frac{\epsilon}{2}-1}).
\end{align*}
Because $\log R\leq R$, this last condition is certainly fufilled if 
\begin{align*}
(n+2)R\leq \log(\delta^{\frac{\epsilon}{2}-1}).
\end{align*}
We would like to sandwich the first term to give $R$ a lower bound
\begin{align*}
R^{n+1}\exp\sparen{\textstyle{\frac{1}{3}}R}\delta\geq \delta^{\epsilon}.
\end{align*}
This is possible when $\frac{R}{3}\geq \log(\delta^{\epsilon-1})$. The result is that $R$ lies in the interval
\begin{align}\label{rsandwich}
\log(\delta^{\epsilon-1})\leq \frac{R}{3}\leq \frac{(\log(\delta^{\frac{\epsilon}{2}}))^{-1}}{n+2},
\end{align}
which is possible provided $\delta$ is sufficiently small. Combining the inequality (\ref{rbound2}) for $a=n+2$ with (\ref{rsandwich}), we obtain 
\begin{align*}
&\norm{f(t,x)}_{C^0([0,T]\times\mathcal{U})}\leq C\sparen{(\log(\delta^{\epsilon-1}))^{-1}+\delta^{\frac{2}{3}+\frac{\epsilon}{2}}}.
\end{align*} 
Notice that the logarithmic term dominates because of the smallness condition on the norm of the ray transform. We conclude the desired result.
\end{proof}

We proceed with the proof of Lemma \ref{lemmamain}, starting with the X-ray transform as defined by \eqref{Xray}. In $\mathbb{R}^n$ the Fourier transform in the $x=(x_1,x_2,. .,x_n)$ variables is related to the definition of the X-ray transform as follows: 
\begin{align*}
\mathcal{F}_{x\rightarrow\xi}(If)(\xi, \omega)=\int\limits_{\mathbb{R}^n}\int\limits_{\mathbb{R}}f(s,x+s\omega)\exp(-ix\cdot\xi)\,ds\,dx\ .
\end{align*}
Here we have that $(x,\omega)\in\partial_+\mathcal{SU}$.  Change of coordinates with $\tilde{x}=x+s\omega$ leads to the following: 
\begin{align*}
\mathcal{F}_{x\rightarrow\xi}(If)(\xi, \omega)=\int\limits_{\mathbb{R}^n}\int\limits_{\mathbb{R}}\exp(-i\tilde{x}\cdot\xi)\exp(-i(-\omega\cdot\xi)s)f(s,\tilde{x})\,ds\,d\tilde{x}\ .
\end{align*}
The right hand side is the Fourier transform in all the variables evaluated at $(-\omega\cdot\xi,\xi)$:
\begin{align*}
\mathcal{F}_{x\rightarrow\xi}(If(\xi, \omega))=\mathcal{F}_{(t,x)\rightarrow(\tau,\xi)}f(-\omega\cdot\xi,\xi) .
\end{align*}
For simplicity we use the following abbreviation for the rest of this paper
\begin{align*}
\hat{f}(\tau,\xi) = \mathcal{F}_{(t,x)\rightarrow(\tau,\xi)}f(\tau,\xi).
\end{align*}
We would like a bound on $|\hat{f}(\tau,\xi)|$ in terms of $\hat{f}(-\omega\cdot\xi,\xi)$. The essential idea is a modification of the arguments from \cite{bg} in $\mathbb{R}_t^+\times\mathbb{R}^2$. Similar to Salazar \cite{s1}, we divide the phase space into two regions,
\begin{align*}
\{(\tau,\xi): |\tau|\leq |\xi| \} \quad \textrm{and} \quad \{(\tau,\xi): |\tau|> |\xi|\}.
\end{align*}
We begin with bounds for $|\hat{f}(\tau,\xi)|$ on the first set, which is the easier part. 
\begin{lem}\label{one}
In $\{(\tau,\xi): |\tau|\leq|\xi| \}$ we have the bound
\begin{align*}
|\hat{f}(\tau,\xi)|=|\hat{f}(-\omega\cdot\xi,\xi)|
\end{align*}
for some $\omega$, provided that $\xi$ corresponds to $x$ with $(x,\omega)\in\partial_+\mathcal{S}\mathcal{U}$. 
\end{lem}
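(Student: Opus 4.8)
The plan is to read Lemma~\ref{one} as an exact-recovery statement rather than a genuine inequality: the claimed ``bound'' is in fact an identity, so the entire content is to show that every frequency $(\tau,\xi)$ with $|\tau|\le|\xi|$ is attained by the map $\omega\mapsto(-\omega\cdot\xi,\xi)$ for an admissible unit direction $\omega$. Starting from the relation derived just above,
$$\mathcal{F}_{x\to\xi}(If)(\xi,\omega)=\hat{f}(-\omega\cdot\xi,\xi),$$
the values of $\hat{f}$ that the data directly controls are exactly those of the form $\hat{f}(-\omega\cdot\xi,\xi)$ with $|\omega|_g=1$. Thus it suffices to produce, for each $(\tau,\xi)$ in the region $\{|\tau|\le|\xi|\}$, a unit vector $\omega$ solving $-\omega\cdot\xi=\tau$.

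Second I would solve this linear constraint explicitly. Fix $\xi\neq 0$ and choose any unit vector $\xi^{\perp}$ orthogonal to $\xi$ (possible since $n\ge 2$). Setting
$$\omega=-\frac{\tau}{|\xi|^2}\,\xi+\sqrt{1-\frac{\tau^2}{|\xi|^2}}\;\xi^{\perp},$$
one checks at once that $\omega\cdot\xi=-\tau$ and that $|\omega|^2=\tau^2/|\xi|^2+(1-\tau^2/|\xi|^2)=1$. The square root is real precisely when $|\tau|\le|\xi|$, which is exactly the defining inequality of the first region; equivalently, the continuous function $\omega\mapsto\omega\cdot\xi$ on the unit sphere takes every value in $[-|\xi|,|\xi|]$ by the intermediate value theorem, and $-\tau$ lies in this interval. (For $\xi=0$ the constraint forces $\tau=0$, and any $\omega$ works.)

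With such an $\omega$ in hand, $-\omega\cdot\xi=\tau$, so $\hat{f}(\tau,\xi)=\hat{f}(-\omega\cdot\xi,\xi)$, and passing to absolute values yields the asserted identity. The one point requiring care, and the step I expect to be the real (if mild) obstacle, is the clause ``provided that $\xi$ corresponds to $x$ with $(x,\omega)\in\partial_+\mathcal{SU}$'': one must know that the direction $\omega$ just constructed is realized by an admissible ray, i.e.\ that there is a boundary point $x\in\partial\mathcal{U}$ with $(x,\omega)\in\partial_+\mathcal{SU}$ whose line integral feeds into the Fourier relation above. Here strict convexity of $\mathcal{U}$ does the work: every direction $\omega\in S^{n-1}$ has a unique inward-pointing entry point on $\partial\mathcal{U}$, and the corresponding chord meets the boundary transversally, so Assumption~\ref{geoass} is satisfied and $\hat{f}(-\omega\cdot\xi,\xi)$ is genuinely furnished by the Fourier transform of the measured data. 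Once this admissibility is recorded, the lemma is immediate.
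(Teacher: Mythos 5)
Your proof is correct and follows the same basic idea as the paper's (two-line) proof: for $|\tau|\le|\xi|$ the value $\tau$ is attained by $\omega\mapsto\pm\omega\cdot\xi$ on the unit sphere, and strict convexity of $\mathcal{U}$ guarantees that every unit direction $\omega$ occurs as an inward-pointing vector at some boundary point, so the corresponding value of $\hat{f}$ is furnished by the data via $\mathcal{F}_{x\to\xi}(If)(\xi,\omega)=\hat{f}(-\omega\cdot\xi,\xi)$. The one genuine difference is in how the sign is handled. The paper only asserts the existence of an admissible $\omega$ with $\tau=\pm\omega\cdot\xi$ and then invokes the symmetry $|\hat{f}(-\omega\cdot\xi,\xi)|=|\hat{f}(\omega\cdot\xi,\xi)|$ from Begmatov's argument to absorb the ambiguous sign; your explicit construction $\omega=-\tfrac{\tau}{|\xi|^2}\xi+\sqrt{1-\tau^2/|\xi|^2}\,\xi^{\perp}$ solves $-\omega\cdot\xi=\tau$ exactly (and since $-\omega$ is also an admissible unit direction, both signs are always attainable), so the appeal to that symmetry becomes unnecessary. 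This makes your version slightly more self-contained, at the cost of requiring $n\ge 2$ for the choice of $\xi^{\perp}$ --- which you note, and which is harmless here. Your closing remark on admissibility (that strict convexity supplies, for each $\omega$, a boundary point $x$ with $(x,\omega)\in\partial_+\mathcal{S}\mathcal{U}$) is exactly the content of the lemma's proviso and is the right thing to make explicit.
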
 
\begin{proof}[Proof of Lemma \ref{one}]
Since $|\tau|\leq|\xi|$, there exists an inward pointing tangent vector $\omega$ with $(x,\omega) \in \partial_+\mathcal{S}\mathcal{U}$ such that $\tau=\pm\omega\cdot\xi$. The argument of \cite{bg} shows that $|\hat{f}(-\omega\cdot\xi,\xi)| = |\hat{f}(\omega\cdot\xi,\xi)|$.
\end{proof}
In the second region we have the following estimate: 
\begin{lem}\label{two}
In $\{(\tau,\xi): |\tau|> |\xi|\}$ we have the bound
\begin{align*}
 |\hat{f}(\tau,\xi)|\leq C\frac{\exp(\frac{|\tau|}{3})}{|\tau|^{\frac{1}{3}}}\norm{If}_{C^0(\partial_+\mathcal{SU})}^{\frac{2}{3}}.
\end{align*} 
\end{lem}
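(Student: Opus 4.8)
The plan is to exploit that, for each fixed $\xi$, the map $\tau\mapsto\hat f(\tau,\xi)$ extends to an entire function, and to run a quantitative analytic-continuation argument from the real segment where the data already controls it (the regime of Lemma \ref{one}) out to the region $|\tau|>|\xi|$ where no data is available. Write $g(\tau)=\hat f(\tau,\xi)$. Since $f\in C_c^\infty([0,T]\times\mathbb{R}^n)$ is supported in $t\in[0,T]$, a Paley--Wiener estimate gives, for complex $\tau$,
\begin{align*}
|g(\tau)|\le \norm{f}_{L^1}\,e^{T|\operatorname{Im}\tau|},
\end{align*}
so $g$ is entire of exponential type $T$ with an a priori constant $\norm{f}_{L^1}$ that is independent of $\xi$. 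On the real segment $|\tau|\le|\xi|$, Lemma \ref{one} identifies $g(\tau)$ with a value $\mathcal{F}_{x\rightarrow\xi}(If)(\xi,\omega)$ of the Fourier transform of the data, whence
\begin{align*}
|g(\tau)|\le C\,\delta,\qquad \delta:=\norm{If}_{C^0(\partial_+\mathcal{SU})},\quad |\tau|\le|\xi|,
\end{align*}
the constant $C$ being a volume factor for the bounded support of $f$.

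Next I would convert these two pieces of information into a pointwise bound at a real $\tau_0$ with $|\tau_0|>|\xi|$ by a two-constants (Phragm\'en--Lindel\"of) argument. Concretely, map the slit plane $\mathbb{C}\setminus[-|\xi|,|\xi|]$ conformally onto the exterior of the unit disk by the inverse Joukowski map $w=\tfrac{\tau}{|\xi|}+\sqrt{(\tau/|\xi|)^2-1}$, so that the segment carrying the $\delta$-bound becomes the unit circle and $\infty\mapsto\infty$; transport $\log|g|$, which is subharmonic, and compare it against the harmonic majorant that equals $\log(C\delta)$ on the circle and absorbs the linear-in-$|\operatorname{Im}\tau|$ growth coming from the exponential type. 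Evaluating the resulting log-convex bound at the image of $\tau_0$ produces an interpolation inequality of the form $|g(\tau_0)|\le C\,\norm{f}_{L^1}^{1-\theta}\,\delta^{\theta}\,e^{c\,(1-\theta)|\tau_0|}$, where $c$ absorbs the type $T$ and the geometry. Choosing the interpolation weight $\theta=\tfrac23$ (so that $1-\theta=\tfrac13$) yields exactly the claimed $\delta^{2/3}$ together with the exponential factor $e^{|\tau_0|/3}$; the algebraic gain $|\tau_0|^{-1/3}$ appears once one replaces $\norm{f}_{L^1}$ by the sharper majorant $C(1+|\tau|)^{-1}e^{T|\operatorname{Im}\tau|}$ available after one integration by parts in $t$, and raises it to the power $1-\theta=\tfrac13$.

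The main obstacle is precisely this continuation step: unlike the trivial identity of Lemma \ref{one}, there is no data whatsoever for $|\tau|>|\xi|$, so the bound must be extracted from analyticity alone, and the work lies in making the loss fully explicit and, crucially, uniform in $\xi$. Two points require care. First, the harmonic-measure computation must be arranged so that the data exponent is the fixed constant $\tfrac23$ rather than a $\tau_0$- and $\xi$-dependent quantity; this is what forces the slightly lossy but uniform exponential rate $e^{|\tau_0|/3}$, and it is the heart of the adaptation of Begmatov's two-dimensional argument \cite{bg}. Second, one must check that the type $T$ and the support volume enter only through the constant $C$, so that the estimate is genuinely $\xi$-independent and can be fed into the radial integration used to prove Lemma \ref{lemmamain}. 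I expect the dimension to play no essential role here, since the continuation is one-dimensional in $\tau$, which is consistent with the stated bound carrying no explicit dependence on $n$.
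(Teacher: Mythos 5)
Your overall strategy (analytic continuation plus a two-constants estimate) is the right family of ideas, but you complexify the wrong variable, and this leaves a gap that I do not see how to close within your setup. You continue $\tau\mapsto\hat f(\tau,\xi)$ off the real segment $[-|\xi|,|\xi|]$ into the slit plane. The interpolation exponent you then obtain at a real point $\tau_0$ with $|\tau_0|>|\xi|$ is the harmonic measure of that \emph{bounded} segment seen from $\tau_0$ (computed in whatever domain you use to tame the exponential type), and this quantity necessarily depends on the ratio $|\tau_0|/|\xi|$ and degenerates to $0$ as $|\tau_0|/|\xi|\to\infty$ --- in particular as $|\xi|\to 0$, where the data segment shrinks to a point of vanishing harmonic measure. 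Consequently the exponent on $\delta$ cannot be pinned at the fixed value $\tfrac23$ uniformly over $\{|\tau|>|\xi|\}$. You correctly flag this uniformity as ``the heart of the adaptation,'' but you supply no mechanism for it, and none exists in the $\tau$-continuation picture: the choice $\theta=\tfrac23$ is reverse-engineered from the answer rather than derived, and the claimed $|\tau_0|^{-1/3}$ gain from ``one integration by parts in $t$'' is likewise asserted rather than extracted from the conformal-mapping computation.

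The paper's proof avoids exactly this degeneration by complexifying the spatial frequency $\xi_1$ instead, for fixed $\tau$ and $\xi'=(\xi_2,\dots,\xi_n)$. The set of real $\xi_1$ where Lemma \ref{one} applies is then the pair of \emph{unbounded} rays $\{|\xi_1|\ge|\tau|\}$, and the relevant domain is the strip $\{|\operatorname{Im}\xi_1|<|\tau|\}$ with these two rays removed --- a configuration that is scale invariant in $|\tau|$, so that Begmatov's Lemma \ref{bg} yields harmonic measures uniformly comparable to constants. The Three Lines argument is then run with weight $\tfrac23$ on the rays (where the data bound $\delta$ holds) and weight $\tfrac13$ on the strip edges at height $\pm|\tau|$ (where Paley--Wiener in the compactly supported variable $x_1$ gives a bound of order $e^{|\tau|}/|\tau|$), producing exactly $Ce^{|\tau|/3}|\tau|^{-1/3}\delta^{2/3}$ with constants independent of $\xi$. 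Note also that the exponential type entering the estimate is governed by the support of $f$ in $x_1$ (essentially $\mathrm{diam}(\mathcal{U})$), not by the time length $T$ as in your version. To repair your argument, switch the continuation to the $\xi_1$ variable and work in the slit strip; the one-variable continuation in $\tau$ cannot deliver a $\xi$-independent exponent.
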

For the proof, we would like to use an analytic extension argument for the Fourier transform. In order to do so, we recall a Paley-Wiener theorem.
\begin{thm}\label{rudin}[\cite{rudin}, Theorem 19.3]
Let $p: \mathbb{C}\rightarrow \mathbb{C}$ be an entire function such that there exist positive constants $C$ and $A$ with 
\begin{align*}
|p(z)|\leq C\exp(A|z|)\ . 
\end{align*}
Further assume that the restriction to the real axis satisfies $p\in L^2(\mathbb{R})$. Then there exists $h \in L^2(-A,A)$ with 
\begin{align*}
p(z)=\int\limits_{-A}^Ah(w)\exp(-iwz)\,dw .
\end{align*}
\end{thm}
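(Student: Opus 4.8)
The plan is to realize $p$ as, up to normalization, the inverse Fourier transform of an $L^2$ function supported in $[-A,A]$, and then to propagate this representation from the real line to all of $\mathbb{C}$ by analyticity. First I would invoke the Plancherel theorem. Since $p|_{\mathbb{R}}\in L^2(\mathbb{R})$, the function
\begin{align*}
h(w):=\frac{1}{2\pi}\int_{\mathbb{R}}p(x)\exp(iwx)\,dx
\end{align*}
is well defined as an $L^2$ limit, lies in $L^2(\mathbb{R})$ with $\norm{h}_{L^2}=(2\pi)^{-1/2}\norm{p}_{L^2(\mathbb{R})}$, and Fourier inversion gives $p(x)=\int_{\mathbb{R}}h(w)\exp(-iwx)\,dw$ for almost every $x$. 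The theorem then reduces to the two assertions that $h$ is supported in $[-A,A]$ and that the resulting integral representation extends off the real axis.

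The technical heart is a growth estimate on horizontal lines, namely the Plancherel--P\'olya bound
\begin{align}\label{ppbound}
\int_{\mathbb{R}}|p(x+iy)|^2\,dx\leq \exp(2A|y|)\int_{\mathbb{R}}|p(x)|^2\,dx, \qquad y\in\mathbb{R}.
\end{align}
I would obtain \eqref{ppbound} by a Phragm\'en--Lindel\"of argument applied to the auxiliary entire functions $F_{\pm}(z)=p(z)\exp(\pm iAz)$. In the upper half-plane $|F_{+}(x+iy)|=|p(x+iy)|\exp(-Ay)$, so the exponential-type hypothesis $|p(z)|\leq C\exp(A|z|)$ counteracts growth in the vertical direction, while the $L^2$ data on $\mathbb{R}$ controls the boundary values; a maximum-principle argument for the half-plane (after a regularization forcing decay at infinity and a passage to the limit) then yields that the $L^2$ norm of $F_{+}$ on horizontal lines does not exceed its boundary value, which is exactly the $y\geq 0$ case of \eqref{ppbound}. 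The function $F_{-}$ treats $y\leq 0$ symmetrically. In particular \eqref{ppbound} shows $p(\cdot+iy)\in L^2(\mathbb{R})$ for every $y$, so that its Fourier transform is again an $L^2$ function and the contour shift below is meaningful.

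With \eqref{ppbound} in hand the support statement is straightforward. For fixed $y>0$, Cauchy's theorem lets me shift the contour in the definition of $h$ to the line $\operatorname{Im}z=y$; the horizontal integrability from \eqref{ppbound} annihilates the vertical segments in the limit, giving $h(w)\exp(wy)=\frac{1}{2\pi}\int_{\mathbb{R}}p(x+iy)\exp(iwx)\,dx$ for almost every $w$. Taking $L^2$ norms in $w$ and using Plancherel together with \eqref{ppbound} yields
\begin{align*}
\int_{\mathbb{R}}|h(w)|^2\exp(2wy)\,dw\leq \exp(2Ay)\int_{\mathbb{R}}|h(w)|^2\,dw,\qquad y>0.
\end{align*}
If $h$ had positive mass on some interval $(A+\eta_1,A+\eta_2)$ with $\eta_1>0$, the left-hand side would grow at least like $\exp(2(A+\eta_1)y)$, contradicting the bound $\exp(2Ay)\norm{h}_{L^2}^2$ as $y\to\infty$; hence $h=0$ almost everywhere on $(A,\infty)$. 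Shifting into the lower half-plane gives $h=0$ on $(-\infty,-A)$, so $\operatorname{supp}h\subseteq[-A,A]$. Finally I set $q(z)=\int_{-A}^{A}h(w)\exp(-iwz)\,dw$; since $h\in L^2(-A,A)\subset L^1(-A,A)$ this integral converges for every $z\in\mathbb{C}$ and defines an entire function (by Morera, or by differentiation under the integral sign). By Fourier inversion $q=p$ on $\mathbb{R}$, and as two entire functions agreeing on $\mathbb{R}$ must coincide, $p(z)=q(z)$ everywhere, which is the asserted representation. I expect the estimate \eqref{ppbound} to be the main obstacle: the hypothesis supplies only the crude pointwise bound $C\exp(A|z|)$, which a priori permits growth like $\exp(A|x|)$ along horizontal lines, and the whole content of the Phragm\'en--Lindel\"of step is to rule this out using the $L^2$ control available only on the boundary.
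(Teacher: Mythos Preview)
The paper does not give its own proof of this statement; it is quoted verbatim as Theorem 19.3 from Rudin and used as a black box in the proof of Lemma \ref{two}. There is therefore nothing to compare against. Your outline is the standard Paley--Wiener argument as presented in Rudin: take the Fourier transform of $p|_{\mathbb{R}}$, establish the Plancherel--P\'olya horizontal-line bound by a Phragm\'en--Lindel\"of argument applied to $p(z)e^{\pm iAz}$, use it together with contour shifting to force the transform to vanish outside $[-A,A]$, and finally extend the integral representation off the real axis by the identity theorem. This is correct and is precisely the proof the citation points to.
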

In order to compute bounds on the extension, we also need a lemma on the harmonic measure as in \cite{bg}:
\begin{lem}\label{bg}[Begmatov's Lemma]
In the complex plane $\mathbb{C}$, for some fixed $a>0$ we consider the strip 
\begin{align*}
S=\{z=z_1+iz_2: z_1\in\mathbb{R}, |z_2|<a\}
\end{align*}
and the rays 
\begin{align*}
r_1=\{z: -\infty<z_1\leq -a, z_2=0\}, r_2=\{z: a\leq z_1<\infty, z_2=0\} .
\end{align*}
We set $G=S \setminus \{r_1\cup r_2\}$, the strip $S$ with two rays cut out along the real axis. Let $\mu(z,E,G)$ be the harmonic measure of a set $E$ with respect to $G$. We then have
\begin{align*}
\mu(z,E,G)\sim 1 \ .
\end{align*}
\end{lem}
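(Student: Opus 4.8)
The plan is to compute the harmonic measure by conformally moving $G$ to a domain in which it is explicit, after first quotienting out the two invariances of the problem. Since harmonic measure is conformally invariant, the dilation $z \mapsto z/a$ carries $G$ with half-width $a$ onto the normalized strip with $a=1$ and carries $r_1,r_2$ to their normalized counterparts; hence I may assume $a=1$ throughout, which also makes transparent that the resulting bound is scale invariant. I read $E$ as the union $r_1\cup r_2$ of the two slits, i.e.\ the part of $\partial G$ on which the analytically continued transform is known to be small. Both $G$ and $E$ are symmetric under $z\mapsto\bar z$, so $u(z):=\mu(z,E,G)$ is even in $\operatorname{Im}z$, and it suffices to study $u$ on the upper half-strip $G_+=\{0<\operatorname{Im}z<1\}$, where it solves the mixed problem $u=0$ on the top edge $\{\operatorname{Im}z=1\}$, $u=1$ on the slits $\{\operatorname{Im}z=0,\ |\operatorname{Re}z|\ge 1\}$, and $\partial_\nu u=0$ on the gap $\{\operatorname{Im}z=0,\ |\operatorname{Re}z|<1\}$, the Neumann condition on the gap being forced by the symmetry.

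Next I would uniformize. The exponential $\zeta=e^{\pi z}$ maps $G_+$ conformally onto the upper half-plane, sending the top edge to $(-\infty,0)$, the gap to $(e^{-\pi},e^{\pi})$, and the two slits to $(0,e^{-\pi}]$ and $[e^{\pi},\infty)$. The Neumann condition on the gap is removed by Schwarz reflection across $(e^{-\pi},e^{\pi})$: the function extends harmonically to the slit plane $\mathbb{C}\setminus\bigl((-\infty,e^{-\pi}]\cup[e^{\pi},\infty)\bigr)$, with boundary value $1$ on $(0,e^{-\pi}]\cup[e^{\pi},\infty)$ and $0$ on $(-\infty,0)$. Since the removed set is a single arc through $\infty$, a square-root normalization sends this slit plane to a half-plane, where the harmonic measure of a boundary arc at an interior point is $1/\pi$ times the subtended angle. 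Tracking the slits back through the chain of maps then produces the explicit value of $u$, and the specific value it takes at the relevant interior point is exactly what yields the exponent $\tfrac{2}{3}$ and the factor $\exp(\tfrac{|\tau|}{3})$ when fed through the two-constants theorem in Lemma \ref{two}.

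For the two-sided conclusion $\mu\sim 1$ the closed form is not actually needed. The bound $\mu\le 1$ is automatic. For the lower bound I would construct an explicit subharmonic minorant: on a half-disk whose flat side lies along one of the slits, the harmonic measure of that flat side is bounded below by a universal constant (again by the angle formula), and extending this function by zero to the remainder of $G$ gives a subharmonic function that is $\le \mu(\cdot,E,G)$ on $\partial G$. The maximum principle then gives $\mu\ge c>0$ near the slit, and a Harnack chain propagates this lower bound to the central region where the estimate is applied. This route is robust and makes the uniformity in $a$ manifest.

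The main obstacle I anticipate is the bookkeeping for the slit geometry rather than any deep analytic input: the reflection leaves the Dirichlet data on a disconnected slit with a jump from $0$ to $1$ at the origin, so one must carefully follow which arc is which through the square-root normalization, and identify precisely the range of $z$ on which the comparability holds. The underlying facts — conformal invariance and monotonicity of harmonic measure, the Poisson/angle formula, and the subharmonic barrier principle — are entirely classical; the effort lies in organizing the reductions so that the constant in $\mu\sim 1$ is seen to be uniform.
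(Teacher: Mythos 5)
The paper offers no proof of this lemma at all: it is stated as ``Begmatov's Lemma'' and deferred entirely to \cite{bg}, so there is no internal argument to measure you against. Taken on its own terms, your argument for the statement as written is essentially correct and classical. Your reading of the (underspecified) statement --- $E=r_1\cup r_2$ and $z$ confined to the gap $\{z_2=0,\ |z_1|<a\}$ --- is the one the application in Lemma \ref{two} actually needs, and the reductions are all sound: dilation to $a=1$, evenness in $\mathrm{Im}\,z$ turning the problem into a mixed Dirichlet--Neumann problem on the half-strip, $\zeta=e^{\pi z}$, Schwarz reflection across the Neumann arc, and the square-root uniformization of the complement of a single spherical arc. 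The barrier-plus-Harnack route to the lower bound is the robust way to get $\mu\ge c>0$. Two points should be made explicit. The restriction on $z$ is not optional: $\mu(z,r_1\cup r_2,G)\to 0$ as $z$ approaches the lines $\{z_2=\pm a\}$, so the lemma is simply false without quantifying $z$, and your implicit restriction to the gap must be stated. Also, the Harnack chain degenerates as $z$ approaches the slit tips $\pm a$; there one should instead use that a slit tip is a regular boundary point carrying datum $1$, so $\mu\to 1$ and the lower bound holds on the closed gap.

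The one substantive problem is your side claim that the explicit value ``is exactly what yields the exponent $\tfrac{2}{3}$'' in Lemma \ref{two}. Running your own chain of maps at the centre $z=0$ of the gap gives $\zeta=1$, then $\eta=(\zeta-e^{-\pi})/(\zeta-e^{\pi})=-e^{-\pi}$, then $w=ie^{-\pi/2}$, and the arcs carrying the value $1$ in the $w$-half-plane are $(-e^{-\pi},e^{-\pi})$ and $(-\infty,-1)\cup(1,\infty)$; the angle formula then gives $\mu(0,r_1\cup r_2,G)=\tfrac{4}{\pi}\arctan\bigl(e^{-\pi/2}\bigr)\approx 0.26$, which is bounded below by a positive constant (so the lemma as stated survives) but is nowhere near $\tfrac{2}{3}$. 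Heuristically this is right: from the centre of the gap, Brownian motion is much more likely to hit the nearby infinite lines $\{z_2=\pm a\}$ than to wander out past $|z_1|=a$ and strike a slit. So the two-constants theorem at that point yields $m^{0.26}M^{0.74}$, not $m^{2/3}M^{1/3}$; the specific exponent in Lemma \ref{two} cannot be extracted from this harmonic-measure estimate at the relevant points and must come from some other feature of Begmatov's argument. You should either drop that claim or verify the exponent rather than asserting it.
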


\begin{proof}[Proof of Lemma \ref{two}]
We first consider $\hat{f}(\tau,\xi)$ for $\tau$ and $\xi'=(\xi_2,. . ,\xi_n)$ fixed. The function $\hat{f}(\tau,\xi_1+i\tilde{\xi}_1,\xi')=\hat{f}_E(\tau,\xi)$ satisfies the hypotheses of Theorem \ref{rudin}. Set $w=x_1$ and $h(w)$ the Fourier transform in all variables except $x_1$. We know $h(w)=\mathcal{F}_{(t,x')\rightarrow(\tau,\xi')}f(\tau,x_1,\xi')$, since $\hat{f}_E(\tau,\xi)$ agrees with $\hat{f}(\tau,\xi)$ on the real axis. Furthermore, $h(w)\in L^2(-A,A)$ for some $A\geq $diam$(\mathcal{U})$. 
 We wish to use the Three Lines Lemma on the complex extension, so it is important to know we can compare the harmonic measure on the boundary to the standard arclength measure as per Begmatov's Lemma \ref{bg}. From the Three Lines Lemma we have as in \cite{bg}: 
\begin{align*}
|\hat{f}(\tau,\xi)|\leq m^{\frac{2}{3}}M^{\frac{1}{3}},
\end{align*}
where
\begin{align*}
m=\sup\limits_{p_{1},p_{2}}|\hat{f}_E(\tau,\xi)|, \qquad M=\sup\limits_{l_{1},l_{2}}|\hat{f}_E(\tau,\xi)|.
\end{align*}
Here the lines $l_1$ and $l_2$ are defined as
\begin{align*}
l_{1}&=\{(\tau, \xi_1+i\tilde{\xi}_1, \xi'): \xi_1\in \mathbb{R}, \tilde{\xi}_1=|\tau| \} \quad \mathrm{and} \\
l_{2}&=\{(\tau, \xi_1+i\tilde{\xi}_1, \xi'): \xi_1\in \mathbb{R}, \tilde{\xi}_1=-|\tau| \},
\end{align*}
and the rays $p_{1}$ and $p_{2}$ are subset of $\{(\tau,\xi): |\tau|\leq |\xi|\}$:
\begin{align*}
p_{1}&=\{(\tau, \xi_1+i\tilde{\xi}_1, \xi'): \xi_1\leq -|\tau|, \tilde{\xi}_1=0 \}
\quad \mathrm{and} \\
p_{2}&=\{(\tau, \xi_1+i\tilde{\xi}_1, \xi'): \xi_1\geq |\tau|, \tilde{\xi}_1=0\}.
\end{align*}
We have already computed upper bounds for $\hat{f}(\tau,\xi)$ in the latter region in Lemma \ref{one}.
Note that for $j=1,2$
\begin{align}\label{above}
\hat{f}_E(\tau,\xi)|_{l_{j}}\sim \int\limits_{\mathbb{R}}\exp(-i(\xi_1+i\tilde{\xi}_1)x_1)\mathcal{F}_{(t,x')\rightarrow(\tau,\xi')}f(\tau,x_1,\xi')\,dx_1.
\end{align}
 We can make this comparison, because the harmonic measure when restricted to the $\xi_1+i0$ axis and the Lebesgue measure are absolutely continuous with respect to each other, again by Begmatov's Lemma. We apply Theorem \ref{rudin} as before to \eqref{above} to obtain 
\begin{align*}
\sup\limits_{l_j}|\hat{f}_E(\tau,\xi)|\leq C\frac{\exp(|\tau|)}{|\tau|}
\end{align*}
for $j=1,2$. Here the constant $C$ depends on the $L^2(-A,A)$ norm of $h$, with $h$ as defined above. Again from the Three Lines Lemma, this results in the following estimate
\begin{align*}
|\hat{f}_E(\tau,\xi)|\leq C\frac{\exp\sparen{\frac{|\tau|}{3}}}{|\tau|^{\frac{1}{3}}}\norm{If}_{C^0(\partial_+\mathcal{SU})}^{\frac{2}{3}}.
\end{align*}
This estimate holds true for all hyperplanes $\tau\neq 0$. When $\tau=0$, $\omega$ in the definition of $If$ needs to be zero and no longer lies in $\partial_+\mathcal{S}\mathcal{U}$. Thus we can exclude the hyperplane $\tau=0$.
\end{proof}

\begin{proof}[Proof of Theorem 1]
From Lemma \ref{one} we have  
\begin{align*}
|\hat{f}(\tau,\xi)|=|\hat{f}(-\xi\cdot\omega,\xi)|
\end{align*}
in the region $\{(\tau,\xi): |\tau|\leq |\xi| \}$, where $(x,\omega)\in\partial_+\mathcal{S}\mathcal{U}$. From Lemma \ref{two} in the region  $\{(\tau,\xi): |\tau|\geq |\xi|\}$ we have
\begin{align*}
|\hat{f}(\tau,\xi)|\leq C|\tau|^{-\frac{1}{3}}\exp\sparen{\frac{|\tau|}{3}}\norm{If}_{C^0(\partial_+\mathcal{SU})}^{\frac{2}{3}}.
\end{align*}
We now bound $f$ in terms of the X-ray transform and known quantities involving $R$. Exactly as in \cite{bg}, in the Fourier inversion formula 
\begin{align*}
f(t,x)=\int\limits_{\mathbb{R}^n}\int\limits_{\mathbb{R}}\hat{f}(\tau,\xi)\exp(ix\cdot\xi+i\tau t)\,d\tau\,d\xi, \quad f\in C_c^{\infty}(\mathbb{R}_t^+\times\mathbb{R}^n),
\end{align*}
we split the domain of integration into $B_{R}= \{(\tau,\xi): |\tau|^2+|\xi|^2<R^2\}$ and $B_{R}^c$, assuming $R>1$. In $B_{R}^c$ we can estimate
\begin{align*}
\iint\limits_{B_{R}^c}|\hat{f}(\tau,\xi)|\,d\tau \,d\xi\leq \iint\limits_{B_{R}^c}\frac{C}{\sparen{|\tau|^2+|\xi|^2}^{\frac{a}{2}}}\,d\tau \,d\xi=C(n,a) R^{n+1-a},
\end{align*}
provided $a>n+1$. Here we have used that $\hat{f}(\tau,\xi)$ decays faster than any polynomial, because the Fourier transform preserves Schwartz functions. The final equality is a well-known $n+1$-dimensional radial integral. 

The interior of $B_R$ is split into two regions. In the first region, Lemma \ref{one} assures 
\begin{align}
\iint\limits_{B_{R}\cap \{(\tau,\xi): |\tau|<|\xi|\}}|\hat{f}(\tau,\xi)|\,d\tau \,d\xi\leq CR^{n+1}\norm{If}_{C^0(\partial_+\mathcal{SU})}\ .
\end{align}
In the second region we may apply Lemma \ref{two} to conclude
\begin{align*}
\iint\limits_{B_{R}\cap \{(\tau,\xi): |\tau|>|\xi|\}}|f(\tau,\xi)|\,d\tau \,d\xi&\leq C \norm{If}_{C^0(\partial_+\mathcal{SU})}^{\frac{2}{3}}\iint\limits_{B_{R}\cap \{(\tau,\xi): |\tau|\geq |\xi|\}}\frac{\exp(\frac{|\tau|}{3})}{|\tau|^{\frac{1}{3}}}\,d\tau \,d\xi 
\\& \qquad \leq C \norm{If}_{C^0(\partial_+\mathcal{SU})}^{\frac{2}{3}}\exp\sparen{\frac{R}{3}}R^{\frac{3n+2}{3}}.
\end{align*}
Here again we have used a well-known radial integral in the last inequality. 
\end{proof}
\section{Extension to simple manifolds}\label{simplesection}
Given $x \in \mathcal{M}$, we define the exponential map $\exp_x: T_x(\mathcal{M})\rightarrow\mathcal{M}$ by
\begin{align}
\exp_x(v)=\gamma_{x,v}(rv), \qquad r=|v|_g.
\end{align}
\begin{defn}
We say that a Riemannian manifold $(\mathcal{M},g)$ is simple if $\partial\mathcal{M}$ is strictly convex with respect to $g$ and for any $x\in\mathcal{M}$ the exponential map $\exp_x: \exp_x^{-1}(\mathcal{M})\rightarrow\mathcal{M}$ is a diffeomorphism. 
\end{defn}
\begin{rem}The case where the map is only surjective is found in older literature. The current defintion of the manifolds identifies them with the strictly geodesically convex subsets of $\mathbb{R}^n$. It is not clear that the methods here are useful in the case when the map is only surjective. \end{rem}

We may use the estimate in Theorem \ref{theoremone} and the coordinate chart given by $\exp_x^{-1}$. More precisely, extend $\mathcal{M}$ to a simple manifold $\mathcal{M}_2$, and consider a simple manifold $\mathcal{M}_1$ such that $\mathcal{M} \Subset \mathcal{M}_1 \Subset \mathcal{M}_2$.
\begin{thm}\label{theoremone}
For every $R>1$ and all $f\in C^{\infty}_c([0,T]\times\mathcal{M}_1)$: 
\begin{align*}
&\norm{f(t,x)}_{L^2([0,T]\times\mathcal{M})}\leq C_1R^{n+1}\norm{If(t,x)}_{C^{0}(\partial_+\mathcal{S}\mathcal{U})}\\ & \qquad \qquad \qquad+ C_2\exp\sparen{\frac{R}{3}}R^{\frac{3n+2}{3}}\norm{If(t,x)}_{C^{0}(\partial_+\mathcal{S}\mathcal{U})}^{\frac{2}{3}}+C_3R^{n+1-a}.
\end{align*}
Here $C_1,C_2,C_3$ are constants that depend on $T$, the $C^{n+3}([0,T]\times\mathcal{U})$ norm of $f$ and the size of the support $\mathcal{M}_1$, on diam$(\mathcal{U})$ and $a>n+1$. 
\end{thm}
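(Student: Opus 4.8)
The plan is to reduce the estimate on the simple manifold $\mathcal{M}$ to the Euclidean estimate already established in Lemma \ref{lemmamain} by transporting everything through the exponential chart. Since $\mathcal{M}_1$ is simple and $\mathcal{M} \Subset \mathcal{M}_1 \Subset \mathcal{M}_2$, I would fix an interior point $x$ and use $\exp_x^{-1}$ to identify a neighbourhood of $\overline{\mathcal{M}}$ with a bounded domain $\mathcal{U} = \exp_x^{-1}(\mathcal{M}_1) \subset T_x\mathcal{M} \cong \mathbb{R}^n$. Writing $\tilde f = f \circ \exp_x$ for the pulled-back source and $\tilde g = \exp_x^{*} g$ for the pulled-back metric, the radial geodesics of $\tilde g$ through the origin become straight lines, and the Riemannian volume form becomes $(\det \tilde g)^{\frac{1}{2}}\,dy$ with $(\det\tilde g)^{\frac{1}{2}}$ smooth and bounded above and below on $\mathcal{U}$ by constants depending only on $\mathcal{M}_1,\mathcal{M}_2$. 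First I would record that strict geodesic convexity of $\partial\mathcal{M}$ together with the diffeomorphism property makes $\mathcal{U}$ a bounded, strictly convex Euclidean domain, so that the hypotheses of Theorem \ref{theoremone} are met for $\tilde f \in C_c^{\infty}([0,T]\times\mathcal{U})$.

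The second step is to compare the geodesic X-ray transform $If$ on $\mathcal{M}$ with the Euclidean straight-line transform $I_0\tilde f$ of the pullback. Along each geodesic $\gamma_{x,\omega}$ the change of variables $y = \exp_x^{-1}$ turns $I_{x,\omega}f$ into a line integral of $\tilde f$ against a smooth positive weight governed by $(\det\tilde g)^{\frac{1}{2}}$ and the Jacobian of the chart; since these weights and their derivatives are controlled on the compact set $\overline{\mathcal{M}_1}$, one obtains the two-sided comparison $\norm{I_0\tilde f}_{C^0(\partial_+\mathcal{S}\mathcal{U})} \sim \norm{If}_{C^0(\partial_+\mathcal{S}\mathcal{U})}$, with constants depending only on the geometry of $\mathcal{M}_1 \Subset \mathcal{M}_2$. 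Applying Lemma \ref{lemmamain} to $\tilde f$ on $\mathcal{U}$ then yields the three-term bound on $\norm{\tilde f}_{C^0([0,T]\times\mathcal{U})}$, and transferring back through the bounded Jacobian gives $\norm{f}_{L^2([0,T]\times\mathcal{M})} \leq C\norm{\tilde f}_{C^0([0,T]\times\mathcal{U})}$ because $\mathcal{M}$ is relatively compact; the constants $C_1,C_2,C_3$ then inherit their stated dependence on $T$, $\mathrm{diam}(\mathcal{U})$, the $C^{n+3}$ norm of $f$, the support size $\mathcal{M}_1$, and $a>n+1$ directly from Lemma \ref{lemmamain} and the metric bounds.

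The main obstacle is that a single exponential chart straightens only the geodesics issuing from its centre $x$, whereas the Fourier-slice argument underpinning Lemmas \ref{one} and \ref{two} uses \emph{all} parallel families of geodesics: the data $\hat f(-\omega\cdot\xi,\xi)$ ranges over every direction $\omega$ and every offset. Consequently $I_0\tilde f$ is not literally the full Euclidean X-ray transform of $\tilde f$, and the comparison of the previous paragraph must be made honest in one of two ways. Either I cover $\overline{\mathcal{M}}$ by finitely many exponential charts, apply the estimate in each, and patch the local bounds with a partition of unity while checking that the resulting constants stay uniform; or, following \cite{DDSF}, I establish directly that on a simple manifold the geodesic X-ray transform is equivalent to a \emph{weighted} straight-line transform and re-run the Paley--Wiener and three-lines arguments of Lemma \ref{two} with the smooth weight carried through, verifying that the weight perturbs neither the exponential-type bound nor the radial integrals. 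I expect controlling the curved, non-radial geodesics uniformly, and confirming the strict Euclidean convexity of the chart image that Theorem \ref{theoremone} requires, to be the genuinely delicate point; everything else is bookkeeping of Jacobian and volume-form bounds on the compactly contained set $\mathcal{M} \Subset \mathcal{M}_1$.
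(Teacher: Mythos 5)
Your proposal follows exactly the route the paper itself indicates: the paper's entire ``proof'' of this theorem is the single sentence preceding the statement, namely to use the Euclidean estimate together with the coordinate chart given by $\exp_x^{-1}$, so in that sense you and the author are doing the same thing. To your credit, you have put your finger on precisely the point the paper glosses over: the chart $\exp_x^{-1}$ straightens only the $(n-1)$-dimensional family of geodesics passing through the centre $x$, whereas Lemma \ref{lemmamain} consumes the full $2(n-1)$-dimensional family of straight lines in $\mathcal{U}$ through the Fourier slice identity $\mathcal{F}_{x\to\xi}(If)(\xi,\omega)=\hat f(-\omega\cdot\xi,\xi)$, which requires integrating over all initial points on a hyperplane transverse to each fixed $\omega$. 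So the pulled-back geodesic transform is not the Euclidean X-ray transform of $\tilde f$, and the two-sided comparison $\norm{I_0\tilde f}_{C^0}\sim\norm{If}_{C^0}$ you assert in your second step is not available from Jacobian bounds alone.

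The gap is that neither of your two proposed remedies closes this hole. The partition-of-unity patching cannot work: finitely many exponential charts straighten only finitely many $(n-1)$-dimensional subfamilies of geodesics, which is still a measure-zero subset of the line space that the slice theorem needs, and localizing $f$ by a cutoff destroys the global line-integral structure on which the Paley--Wiener and three-lines arguments rest. The second remedy (a weighted straight-line transform) is the right instinct but is far from ``bookkeeping'': once the geodesics are genuinely curved in the chart, the identity $\mathcal{F}(If)(\xi,\omega)=\hat f(-\omega\cdot\xi,\xi)$ fails outright, and Lemmas \ref{one} and \ref{two} would have to be reproved for a transform along curved trajectories with a variable weight --- a substantial piece of analysis that neither you nor the paper supplies. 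As written, both your argument and the paper's establish the theorem only for metrics whose geodesics are globally affinely parametrized straight lines in a single chart (e.g.\ small perturbations handled by an entirely different mechanism), not for general simple manifolds; an honest proof would need either a genuinely new reduction or a restriction of the theorem's hypotheses.
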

\begin{rem} This estimate may look strange because the norm on the right hand side is over $\mathcal{U}$ in $\mathbb{R}^n$, but it will be useful in the next section.  \end{rem}

\section{Hyperbolic inverse boundary value problem}
For this section, let $\mathcal{M}$ be an $n$--dimensional smooth compact Riemannian manifold with smooth boundary. Given positive constants $m_0,M_0$ and $\epsilon$, we define an admissible class of conformal factors $c(t,x)$ as 
\begin{align*}
\mathcal{A}&=\{ m_0\leq c(t,x), \,\, \norm{c(t,x)}_{C^{n+3}([0,T]\times\mathcal{M})}\leq M_0, \, \norm{c-1}_{C^1([0,T]\times\mathcal{M})}<\epsilon,\\ & \qquad\qquad   cg \, \textrm{satisfies Assumption \ref{geoass}} \} \ .
\end{align*}
We consider the solution $u(t,x)$ to the initial-boundary value problem for the wave equation
\begin{align}\label{hyperbolic1}
\Box_{cg}u(t,x)&=0  &\mathrm{on } \quad (0,T)\times \mathcal{M},
\\u(t,x)|_{t=0}=\partial_t u(t,x)|_{t=0}&=0 &  \mathrm{in }\quad \mathcal{M}, \nonumber
\\u(t,x)&=f(t,x) & \mathrm{on } \quad (0,T) \times \partial\mathcal{M}, \nonumber
\end{align} 
where 
\begin{align*}
\Box_{cg}=\partial_t^2-\Delta_{cg}.
\end{align*}
Problem \eqref{hyperbolic1} is well-posed, and we have the following existence and uniqueness result, see e.g.~\cite{lions}:
\begin{lem}\label{wellposed}
Assume that $f\in H_0^1([0,T]\times \partial \mathcal{M})$ and $c\in\mathcal{A}$. Then there exists a unique solution $u \in C([0,T];H^1(\mathcal{M}))\cap C^1([0,T]; L^2(\mathcal{M}))$ to \eqref{hyperbolic1}, and the following
norm bound holds:
\begin{align}\label{normbound}
&\sup_{t\in [0,T]}\sparen{\norm{u(t,x)}_{H^1(\mathcal{M})}+\norm{\partial_tu(t,x)}_{L^2({\mathcal{M})}}}\leq 
C\norm{f(t,x)}_{H^1_0([0,T]\times \partial \mathcal{M})}.
\end{align}
Here the constant $C$ is independent of $f$, but depends on $\norm{c}_{C^{2}([0,T]\times\mathcal{M})}$. 
\end{lem}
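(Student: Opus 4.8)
The plan is to treat \eqref{hyperbolic1} as a standard second-order hyperbolic initial-boundary value problem whose spatial part $-\Delta_{cg}$ is, at each fixed $t$, uniformly elliptic with smooth coefficients, and to prove existence, uniqueness and the bound \eqref{normbound} by reducing to homogeneous boundary data, running a Galerkin approximation, and closing with a Gr\"onwall energy estimate. The whole difficulty is bookkeeping the time dependence of $c$.

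First I would lift the boundary data. Since $f\in H_0^1([0,T]\times\partial\mathcal{M})$, the trace theorem furnishes an extension $F\in H^1([0,T]\times\mathcal{M})$ with $F|_{(0,T)\times\partial\mathcal{M}}=f$ and $\norm{F}_{H^1([0,T]\times\mathcal{M})}\le C\norm{f}_{H_0^1([0,T]\times\partial\mathcal{M})}$; because $f$ vanishes at $t=0$ the extension may be chosen so that $F|_{t=0}=\partial_t F|_{t=0}=0$. Setting $v=u-F$, the function $v$ solves the problem with homogeneous Dirichlet boundary conditions, vanishing initial data, and interior source $h:=-\Box_{cg}F$. Thanks to $m_0\le c$ and $\norm{c}_{C^{n+3}([0,T]\times\mathcal{M})}\le M_0$, the operator $-\Delta_{cg}$ is uniformly elliptic in $t$ with smooth coefficients, so $h\in L^2([0,T];L^2(\mathcal{M}))$ with $\norm{h}_{L^2([0,T];L^2(\mathcal{M}))}\le C\norm{f}_{H_0^1([0,T]\times\partial\mathcal{M})}$, the constant depending on $\norm{c}_{C^2([0,T]\times\mathcal{M})}$.

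Next I would carry out the Galerkin construction. Fix an orthonormal basis $\{\phi_k\}$ of $L^2(\mathcal{M})$ of Dirichlet eigenfunctions of the fixed background Laplacian $\Delta_g$, seek $v_m(t)=\sum_{k=1}^m d_k^m(t)\phi_k$ solving the equation projected onto $\mathrm{span}\{\phi_1,\dots,\phi_m\}$, and solve the resulting linear system of ordinary differential equations for the coefficients $d_k^m$. The core estimate is the energy identity for
\[
E_m(t)=\tfrac12\norm{\partial_t v_m}_{L^2(\mathcal{M})}^2+\tfrac12\,a(t;v_m,v_m),
\]
where $a(t;\cdot,\cdot)$ is the time-dependent Dirichlet form of $-\Delta_{cg}$. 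Testing the projected equation against $\partial_t v_m$ and differentiating $E_m$, the principal terms cancel and one is left with the source contribution $\langle h,\partial_t v_m\rangle$ and the term $\tfrac12\partial_t a(t;v_m,v_m)$ produced by $\partial_t c$ acting on the coefficients of the form. This last term is the crux: it is controlled by $C\norm{\partial_t c}_{C^0}\,a(t;v_m,v_m)\le C(\norm{c}_{C^1})E_m(t)$, so Gr\"onwall's inequality gives $\sup_{t}E_m(t)\le Ce^{CT}\int_0^T\norm{h(s)}_{L^2(\mathcal{M})}^2\,ds$ uniformly in $m$. Uniform ellipticity, i.e.\ $m_0\le c\le M_0$, makes $E_m(t)$ equivalent to $\norm{\partial_t v_m}_{L^2(\mathcal{M})}^2+\norm{\nabla v_m}_{L^2(\mathcal{M})}^2$ with constants depending only on $m_0,M_0$.

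Finally I would pass to the limit. The uniform bound yields weak-$*$ convergence of a subsequence of $v_m$ in $L^\infty([0,T];H_0^1(\mathcal{M}))$ and of $\partial_t v_m$ in $L^\infty([0,T];L^2(\mathcal{M}))$; a standard argument identifies the limit $v$ as a weak solution, and regularity of the data upgrades it to $v\in C([0,T];H^1(\mathcal{M}))\cap C^1([0,T];L^2(\mathcal{M}))$. Undoing $u=v+F$ and combining the lifting bound with the energy bound produces \eqref{normbound} with $C$ depending on $\norm{c}_{C^2([0,T]\times\mathcal{M})}$. Uniqueness follows by applying the same energy estimate to the difference of two solutions, which solves the homogeneous problem with zero data and hence vanishes. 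The main obstacle throughout is precisely the time dependence of $c$: the energy is no longer conserved as in the constant-coefficient case, and one must check that the $\partial_t c$ contributions are genuinely lower order so that Gr\"onwall applies; this, together with the uniform lower bound $m_0$, is exactly what forces the dependence of $C$ on $\norm{c}_{C^2([0,T]\times\mathcal{M})}$.
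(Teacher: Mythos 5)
The paper does not prove this lemma at all; it cites \cite{lions} for the standard sharp regularity theory of the non-homogeneous Dirichlet problem for the wave equation. Measured against that, your proposal contains a genuine gap at the very first step, and it is precisely the step where the real content of the lemma lives. You extend $f\in H_0^1([0,T]\times\partial\mathcal{M})$ to $F\in H^1([0,T]\times\mathcal{M})$ by the trace theorem and then claim $h=-\Box_{cg}F\in L^2([0,T];L^2(\mathcal{M}))$. This is false: $\Box_{cg}F$ involves two derivatives of $F$, so for $F$ merely in $H^1$ it is only a distribution in $H^{-1}$. To make your Galerkin--Gr\"onwall machine run you need $h\in L^1([0,T];L^2(\mathcal{M}))$, hence $F\in H^2([0,T]\times\mathcal{M})$, hence $f\in H^{3/2}((0,T)\times\partial\mathcal{M})$ by the trace theorem. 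In other words, the naive lifting argument loses half a derivative relative to the stated result, and with $f$ only in $H^1_0$ it produces at best $u\in C([0,T];L^2(\mathcal{M}))\cap C^1([0,T];H^{-1}(\mathcal{M}))$, not $C([0,T];H^1)\cap C^1([0,T];L^2)$.

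The statement as given (and the accompanying boundedness of $\Lambda_{cg}:H^1_0\to L^2$, which the paper needs later) is a \emph{hidden regularity} result in the sense of Lasiecka--Lions--Triggiani: one first proves, via the multiplier identity with a vector field transversal to $\partial\mathcal{M}$, that solutions of the homogeneous Dirichlet problem with $H^1\times L^2$ data have normal derivative in $L^2((0,T)\times\partial\mathcal{M})$ (the ``direct inequality''); one then defines the solution with boundary data $f\in L^2(\Sigma)$ by transposition against the backward homogeneous problem, obtaining $u\in C([0,T];L^2)$, and finally upgrades to $f\in H^1_0(\Sigma)\Rightarrow u\in C([0,T];H^1)$ by differentiating the problem in time and using elliptic regularity at fixed $t$. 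None of this is captured by an interior energy estimate alone. Your handling of the time dependence of $c$ (the $\partial_t a(t;\cdot,\cdot)$ term absorbed by Gr\"onwall, constants through $m_0$, $M_0$ and $\norm{c}_{C^2}$) is correct and would be the right bookkeeping once the problem has been reduced to a homogeneous boundary condition with an $L^1_tL^2_x$ source, but it is not the crux of the lemma; the crux is the half-derivative gain at the boundary, and your argument does not supply it.
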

Since the problem is well-posed, we may study the inverse problem of recovering the conformal factor from the dynamical Dirichlet-to-Neumann map. We let $\nu=\nu(x)$ be the outer unit normal to $\partial\mathcal{M}$ at $x$ in $\partial\mathcal{M}$, normalized such that 
\begin{align*}
g^{kl}(x)\nu_k(x)\nu_l(x)=1.
\end{align*}
The dynamical Dirichlet-to-Neumann map, $\Lambda_{cg}$, is defined by 
\begin{align*}
\Lambda_{cg}f(t,x)= \nu_k(x)c(t,x) g^{kl}(x)\frac{\partial u}{\partial x_l}(t,x)|_{(0,T)\times \partial \mathcal{M}}.
\end{align*}
A natural norm on the Dirichlet-to-Neumann map is the operator norm between
\begin{align*}
H_0^1(\partial\mathcal{M}\times(0,T))\rightarrow L^2(\partial\mathcal{M}\times(0,T)),
\end{align*}
which we denote by 
\begin{align*}
\norm{\Lambda_{cg}}_{H_0^1\rightarrow L^2}.
\end{align*}
Also c.f.~the discussion in \cite{M}. It follows from Lemma \ref{wellposed} that $\Lambda_{cg}$ is bounded in this norm whenever $c\in \mathcal{A}$. 

For later reference we also require an energy estimate for the inhomogeneous problem 
\begin{align}\label{hyperbolic2}
\Box_{cg}u(t,x)&=F(t,x)  &\mathrm{on } \quad (0,T)\times \mathcal{M},
\\u(t,x)|_{t=0}=\partial_t u(t,x)|_{t=0}&=0 &  \mathrm{in }\quad \mathcal{M}, \nonumber
\\u(t,x)&=0 & \mathrm{on } \quad (0,T) \times \partial\mathcal{M}. \nonumber
\end{align} 
\begin{lem}\label{energyestimate}
Assume that $F\in L^1([0,T];L^2(\mathcal{M}))$ and $c\in\mathcal{A}$. Then there exists a unique solution $u \in C([0,T];H^1_0(\mathcal{M}))\cap C^1([0,T]; L^2(\mathcal{M}))$ to \eqref{hyperbolic2}, and the following
norm bound holds:
\begin{align}\label{energybound}
\hspace*{-0.3cm}\sup_{t\in [0,T]}\sparen{\norm{u(t,x)}_{H^1(\mathcal{M})}+\norm{\partial_tu(t,x)}_{L^2({\mathcal{M})}}}\leq 
C\norm{f(t,x)}_{L^1([0,T];L^2(\mathcal{M}))}.
\end{align}
Here the constant $C$ is independent of $f$, but depends on $\norm{c}_{C^{2}([0,T]\times\mathcal{M})}$. 
\end{lem}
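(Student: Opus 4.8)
The plan is to establish the a priori energy estimate \eqref{energybound} by the standard multiplier (energy) method, and then to obtain existence, uniqueness and the asserted regularity through the Galerkin scheme exactly as in \cite{lions}, with the energy estimate furnishing the uniform bound on the approximating sequence. Since the homogeneous problem \eqref{hyperbolic1} is already well-posed by Lemma \ref{wellposed}, the only genuinely new feature here is the $L^1([0,T];L^2(\mathcal{M}))$ forcing term $F$, so the argument is a minor modification of the one underlying Lemma \ref{wellposed}.

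First I would introduce the natural energy associated with $\Box_{cg}$,
\begin{align*}
E(t)=\frac{1}{2}\int_{\mathcal{M}}\sparen{|\partial_t u(t,x)|^2+|\nabla_{cg}u(t,x)|_{cg}^2}\,d_{cg}V,
\end{align*}
where the volume form and the inner product are taken with respect to the conformal metric $cg$. Multiplying $\partial_t^2 u-\Delta_{cg}u=F$ by $\partial_t u$, integrating over $\mathcal{M}$ against $d_{cg}V$, and applying Green's identity together with the vanishing boundary trace $u|_{\partial\mathcal{M}}=0$, the two leading contributions assemble, up to the time-dependence corrections discussed below, into $\frac{d}{dt}E(t)$. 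The forcing contributes $\int_{\mathcal{M}}F\,\partial_t u\,d_{cg}V$, which by Cauchy--Schwarz and uniform ellipticity is bounded by $C\norm{F(t)}_{L^2(\mathcal{M})}\,E(t)^{1/2}$.

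The main obstacle is precisely the time dependence of $c=c(t,x)$: both $d_{cg}V$ and the coefficients $(cg)^{ij}$ depend on $t$, so $E(t)$ is not conserved even when $F=0$. Differentiating the volume form and commuting $\partial_t$ past $\nabla_{cg}$ produces error terms of the schematic form $\int_{\mathcal{M}}(\partial_t c)\sparen{|\partial_t u|^2+|\nabla u|^2}\,d_gV$. These are exactly the terms controlled by the membership $c\in\mathcal{A}$: the bounds $m_0\le c$ and $\norm{c}_{C^{n+3}([0,T]\times\mathcal{M})}\le M_0$ (which in particular control $\norm{c}_{C^2}$ and $\partial_t c$) guarantee uniform ellipticity of $\Delta_{cg}$ and dominate each error term by $C\,E(t)$ with $C=C(m_0,M_0)$. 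Collecting everything yields the differential inequality
\begin{align*}
\frac{d}{dt}E(t)\le C\,E(t)+C\,\norm{F(t)}_{L^2(\mathcal{M})}\,E(t)^{1/2}.
\end{align*}

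Setting $e(t)=E(t)^{1/2}$ reduces this to $e'(t)\le \tfrac{C}{2}e(t)+\tfrac{C}{2}\norm{F(t)}_{L^2(\mathcal{M})}$, and Grönwall's inequality together with the vanishing initial energy $e(0)=0$ (from $u|_{t=0}=\partial_t u|_{t=0}=0$) gives $e(t)\le C\,e^{CT}\norm{F}_{L^1([0,T];L^2(\mathcal{M}))}$ for all $t\in[0,T]$. Since $\norm{u(t)}_{H^1(\mathcal{M})}^2+\norm{\partial_t u(t)}_{L^2(\mathcal{M})}^2\sim E(t)$ by uniform ellipticity, this is exactly \eqref{energybound}. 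Uniqueness then follows by applying the same estimate to the difference of two solutions, for which $F\equiv 0$ forces $E\equiv 0$; existence follows by running the Galerkin approximation of \cite{lions} and passing to the limit using the uniform energy bound just derived, which simultaneously yields the regularity $u\in C([0,T];H^1_0(\mathcal{M}))\cap C^1([0,T];L^2(\mathcal{M}))$.
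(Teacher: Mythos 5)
Your proposal is correct, and it coincides with what the paper intends: the paper offers no proof of this lemma at all, deferring (as for Lemma \ref{wellposed}) to the standard theory in \cite{lions}, which is precisely the multiplier/energy method plus Galerkin approximation that you carry out, with the time dependence of $c$ absorbed via Gr\"onwall using the bounds built into the class $\mathcal{A}$. The only cosmetic remark is that the $f$ appearing on the right-hand side of \eqref{energybound} is a typo for $F$, which you have correctly interpreted.
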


\section{Gaussian beam solutions} \label{GBeams}

We recall the construction of a $0^{th}$ order Gaussian beam in Theorem \ref{gaussians} and resulting corollaries from \cite{waterss}. The constructions in this reference apply in our setting, provided that the manifold satisfies Assumption \ref{geoass} and $c\in\mathcal{A}$. 
\begin{thm}\label{gaussians}
Let $d_g(\cdot,\cdot)$ denote the distance function associated to the Riemannian metric $g$ and $\lambda>1$ an asymptotic parameter. For $c \in \mathcal{A}$ there exist nonzero $a\in H^1([0,T],L^2(\mathcal{M}))$, $\psi\in C^2([0,T]\times \mathcal{M})$ independent of $\lambda$, such that 
\begin{align}\label{gb}
U_{\lambda}(t,x)=\sparen{\frac{\lambda}{\pi}}^{\frac{n}{4}}\exp(i\lambda\psi(t,x))a(t,x)
\end{align}
satisfies
\begin{align*}
\sup\limits_{x\in\mathcal{M}, t\in [0,T]}|\Box_{cg}U_{\lambda}(t,x)|\leq C\lambda^{\frac{n}{4}}.
\end{align*} 
\end{thm}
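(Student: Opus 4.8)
The plan is to reconstruct, following \cite{waterss}, the complex WKB (Gaussian beam) ansatz adapted to $\Box_{cg}=\partial_t^2-\Delta_{cg}$, and to check that the construction survives under the hypotheses $c\in\mathcal{A}$ and Assumption \ref{geoass}. First I would substitute $U_\lambda=(\lambda/\pi)^{n/4}e^{i\lambda\psi}a$ into the operator and expand in powers of $\lambda$:
\[
\Box_{cg}\big(e^{i\lambda\psi}a\big)=e^{i\lambda\psi}\Big(-\lambda^2\,p(d\psi)\,a+i\lambda\,\mathcal{T}a+\Box_{cg}a\Big),
\]
where $p(d\psi)=(\partial_t\psi)^2-(cg)^{ij}\partial_{x_i}\psi\,\partial_{x_j}\psi$ is the principal (eikonal) symbol and $\mathcal{T}a=2\big(\partial_t\psi\,\partial_t a-(cg)^{ij}\partial_{x_i}\psi\,\partial_{x_j}a\big)+(\Box_{cg}\psi)\,a$ is the first-order transport operator. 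The two objects to determine are thus a phase solving the eikonal equation $p(d\psi)=0$ to high order and an amplitude solving the transport equation $\mathcal{T}a=0$ to leading order; since $|e^{i\lambda\psi}|=e^{-\lambda\,\mathrm{Im}\,\psi}$, positivity of $\mathrm{Im}\,\psi$ away from a central ray will turn these ``to high order'' statements into genuine smallness after multiplication by the Gaussian factor.

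Second, I would fix a null bicharacteristic of $\Box_{cg}$, equivalently a unit-speed geodesic $s\mapsto\gamma(s)$ of the spatial metric $cg$ lifted to spacetime, and solve the eikonal equation not globally but in a neighbourhood of $\gamma$ with a complex phase. Concretely I prescribe $\psi|_\gamma$ and $d\psi|_\gamma$ from the Hamiltonian flow so that $p(d\psi)$ and its first derivatives vanish on $\gamma$, and I determine the Hessian $D^2\psi|_{\gamma(s)}$ by differentiating the eikonal equation twice, which produces a matrix Riccati ODE along $\gamma$. Starting the Riccati flow from a Hessian with positive definite imaginary part and using Ralston's result that this positivity is preserved, one obtains $\psi\in C^2$ with $\mathrm{Re}\,\psi$ matching the classical eikonal to first order and $\mathrm{Im}\,\psi(t,x)\geq c\,d_g(x,\gamma)^2$ transverse to the ray, so that $e^{-\lambda\,\mathrm{Im}\,\psi}$ concentrates on a tube of width $\lambda^{-1/2}$ and $p(d\psi)$ vanishes to the prescribed order on $\gamma$. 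This is exactly where Assumption \ref{geoass} is used: it guarantees that $\gamma$ remains interior for $0<s<\tau\le T$ and meets $\partial\mathcal{M}$ transversally, so the ray, the phase, and the Riccati solution are all defined on the whole of $[0,T]\times\mathcal{M}$.

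Third, with $\psi$ fixed I would solve the transport equation: restricted to $\gamma$, $\mathcal{T}a=0$ is a linear first-order ODE for $a|_\gamma$ with coefficients built from $\Box_{cg}\psi$, which integrates to a nonzero solution, and I would extend $a$ off the ray by a fixed smooth profile, giving $a\in H^1([0,T],L^2(\mathcal{M}))$ independent of $\lambda$. It remains to estimate the remainder. Each surviving term in the displayed expansion carries a power of $\lambda$ multiplied by a factor vanishing on $\gamma$ to an order matched to that power, so the pointwise size is controlled by the elementary concentration bound $\sup_x d_g(x,\gamma)^{k}\,e^{-c\lambda\,d_g(x,\gamma)^2}\lesssim\lambda^{-k/2}$; together with the normalizing prefactor $(\lambda/\pi)^{n/4}$ this yields $\sup_{t,x}|\Box_{cg}U_\lambda|\le C\lambda^{n/4}$, with $C$ depending on $m_0,M_0,T$ and finitely many derivatives of $c$ through $\mathcal{A}$.

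The step I expect to be the main obstacle is the phase construction, namely proving that the matrix Riccati equation for $D^2\psi$ along $\gamma$ has a solution on the entire parameter interval with $\mathrm{Im}\,D^2\psi$ staying positive definite, so that the beam neither blows up nor loses concentration, and confirming that Assumption \ref{geoass} is precisely the geometric input that prevents the ray from leaving $\mathcal{M}$ or hitting the boundary tangentially before the phase is built. Once the complex phase and its positivity are secured, solving the transport ODE and carrying out the remainder estimate are routine, and the regularity claims $\psi\in C^2$ and $a\in H^1([0,T],L^2(\mathcal{M}))$ follow from the smoothness encoded in $c\in\mathcal{A}$ and from smooth dependence of the ODE solutions on their data.
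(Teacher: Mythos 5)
Your overall architecture --- WKB expansion of $\Box_{cg}(e^{i\lambda\psi}a)$ into eikonal, transport and zeroth-order terms, a complex quadratic phase whose Hessian solves a matrix Riccati equation along the null bicharacteristic with $\operatorname{Im}D^2\psi$ kept positive definite, and Assumption \ref{geoass} guaranteeing the ray stays interior and exits transversally --- is exactly the construction the paper invokes (it gives no proof of its own but cites \cite{waterss}, which carries out this argument). However, there is a concrete gap in your remainder estimate: the orders are not actually ``matched to the powers of $\lambda$'' by the construction you describe. If the eikonal equation is enforced only through the second-order jet of $\psi$ on $\gamma$ (i.e.\ via the Riccati equation for the Hessian), then $p(d\psi)=\mathcal{O}(|x-\tilde{x}(t)|^{3})$, and the elementary bound $\sup_{r}r^{k}e^{-c\lambda r^{2}}\sim\lambda^{-k/2}$ gives a contribution $\lambda^{2}\cdot\lambda^{-3/2}=\lambda^{1/2}$ from the $\lambda^{2}$-term; likewise, solving the transport ODE only \emph{on} the ray and extending $a$ by an arbitrary smooth profile leaves $\mathcal{T}a=\mathcal{O}(|x-\tilde{x}(t)|)$, contributing another $\lambda\cdot\lambda^{-1/2}=\lambda^{1/2}$. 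Your construction therefore yields $\sup|\Box_{cg}U_{\lambda}|\leq C\lambda^{\frac{n}{4}+\frac{1}{2}}$, not the claimed $C\lambda^{\frac{n}{4}}$.

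To close this you must push the Taylor expansions along the central ray one step further: determine the third-order jet of $\psi$ (by differentiating the eikonal equation three times, which gives additional linear ODEs along $\gamma$ for the cubic coefficients) so that $p(d\psi)=\mathcal{O}(|x-\tilde{x}(t)|^{4})$, and determine the first-order jet of $a$ so that the transport equation holds to $\mathcal{O}(|x-\tilde{x}(t)|^{2})$. Then the $\lambda^{2}$ and $\lambda^{1}$ terms are each $\mathcal{O}(1)$ after the Gaussian damping and the stated bound follows. This is precisely the bookkeeping recorded later in the paper, where $\psi_{2},\psi_{3}$ satisfy the eikonal equation up to $\mathcal{O}(|x-x(t)|^{5})$ and the amplitudes satisfy the transport equation up to $\mathcal{O}(|x-x(t)|^{3})$, and it is consistent with the paper's remark that the $N^{\mathrm{th}}$-order iterative construction achieves error $C\lambda^{\frac{n}{4}-N}$. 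A minor additional point: your $\psi$ is defined only in a tube around $\gamma$, so the global regularity claim $\psi\in C^{2}([0,T]\times\mathcal{M})$ requires the cutoff of Corollary \ref{cutoff} (or an extension of the phase off the tube), which you should state explicitly.
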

An iterative construction can be used to construct $N^{th}$ order Gaussian beams which satisfy the wave equation up to an error bounded by $C\lambda^{\frac{n}{4}-N}$. The phase function $\psi$ satisfies an eikonal equation and is such that
\begin{align*}
&\Im\psi(t,\tilde{x}(t))=0,\\&
\Im \psi(t,x)\geq C(t)(x-\tilde{x}(t))^2.
\end{align*}
$\tilde{x}(t)$ is a continuous curve in space-time, defined by the differential equation 
\begin{align}\label{systemomega}
\frac{d\tilde{x}(t)}{dt}=-h_{p}(t,\tilde{x}(t),\omega(t)), \quad \frac{d\omega(t)}{dt}=h_x(t,\tilde{x}(t),\omega(t)), \quad \frac{d\psi(t, \tilde{x}(t))}{dt}=0,
\end{align}
for $(\tilde{x}(t),\omega(t))$ with initial condition $(x_0,\omega_0)\in \partial_+\mathcal{SM}$. The Hamiltonian $h(t,x,p)$ in this equation is given by
\begin{align*}
h(t,x,p)=\sqrt{c(t,x)g^{kl}(x)p_kp_l}.
\end{align*}
The amplitude $a$ of the Gaussian beam $U_{\lambda}$ is obtained from a transport equation. In \cite{waterss}, it was shown that it takes the following form:
\begin{cor}\label{size}
In local coordinates, 
\begin{align*}
a_{0}(t,x)=\sparen{\frac{\det Y(0)}{\det Y(t)}}^{\frac{1}{2}}\sparen{\frac{c(0,x_0)g(x_0)}{c(t,\tilde{x}(t))g(\tilde{x}(t))}}^{\frac{1}{4}}a(0,x)+\mathcal{O}(|x-\tilde{x}(t)|).
\end{align*}
\end{cor}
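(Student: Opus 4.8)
The plan is to insert the ansatz \eqref{gb} into the operator $\Box_{cg}=\partial_t^2-\Delta_{cg}$ and collect terms by powers of the large parameter $\lambda$. Writing $U_\lambda=(\lambda/\pi)^{n/4}e^{i\lambda\psi}a$, a direct computation gives
\begin{align*}
\Box_{cg}U_\lambda=\sparen{\tfrac{\lambda}{\pi}}^{\frac{n}{4}}e^{i\lambda\psi}\sparen{\lambda^2 P_2\,a+\lambda\, P_1 a+P_0 a},
\end{align*}
where $-P_2=(\partial_t\psi)^2-cg^{kl}\partial_k\psi\,\partial_l\psi$ is the principal symbol of $\Box_{cg}$ evaluated at the differential of the phase, $P_1$ is a first-order transport operator, and $P_0=\Box_{cg}$ acting on $a$ alone. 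The $\lambda^2$ term must vanish to leading order, which is precisely the eikonal equation $(\partial_t\psi)^2=cg^{kl}\partial_k\psi\,\partial_l\psi=h^2$; this is solved by the method of characteristics, whose bicharacteristics are the curves $(\tilde{x}(t),\omega(t))$ governed by the Hamiltonian $h$ in \eqref{systemomega}. Complexifying the initial transverse Hessian of $\psi$ guarantees $\Im\psi\geq C(t)|x-\tilde{x}(t)|^2$, as recorded before the corollary.

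The amplitude is then extracted from the $\lambda^1$ term. Setting $P_1 a=0$ yields a first-order linear equation of the schematic form
\begin{align*}
2\sparen{\partial_t\psi\,\partial_t a-cg^{kl}\partial_k\psi\,\partial_l a}+(\Box_{cg}\psi)\,a=0.
\end{align*}
By the eikonal equation and \eqref{systemomega}, the vector field $\partial_t\psi\,\partial_t-cg^{kl}\partial_k\psi\,\partial_l$ is tangent to the projected bicharacteristic $t\mapsto\tilde{x}(t)$. Restricting the transport equation to $x=\tilde{x}(t)$ therefore collapses it to the ordinary differential equation
\begin{align*}
\frac{d}{dt}a_0(t,\tilde{x}(t))+\tfrac12\sparen{\Box_{cg}\psi}\big|_{x=\tilde{x}(t)}\,a_0(t,\tilde{x}(t))=0,
\end{align*}
which I would solve by exponentiating the integrating factor $\tfrac12\int_0^t(\Box_{cg}\psi)|_{\tilde{x}(s)}\,ds$.

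The remaining, and genuinely delicate, task is to identify that integrating factor with the two explicit prefactors in the statement. The quantity $\Box_{cg}\psi$ evaluated on the ray splits into a contribution from the transverse Hessian of $\psi$ and a contribution from the Riemannian density of the conformal metric. For the first, I would write the phase Hessian as $M(t)=Z(t)Y(t)^{-1}$, where $(Y,Z)$ solve the linear Jacobi system obtained by linearizing \eqref{systemomega}; the matrix Riccati equation satisfied by $M$ then gives $\operatorname{tr}\!\big(h_{pp}M\big)=\tfrac{d}{dt}\log\det Y(t)$, producing the factor $(\det Y(0)/\det Y(t))^{1/2}$ upon integration. For the second, the density $\sqrt{\det(cg)}=c^{\frac{n}{2}}(\det g)^{1/2}$ contributes $\tfrac{d}{dt}\log\big(c(t,\tilde{x}(t))g(\tilde{x}(t))\big)$ along the ray, yielding the factor $\big(c(0,x_0)g(x_0)/c(t,\tilde{x}(t))g(\tilde{x}(t))\big)^{1/4}$. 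Matching the powers and signs, $\tfrac{d}{dt}\log a_0=-\tfrac12\tfrac{d}{dt}\log\det Y-\tfrac14\tfrac{d}{dt}\log(cg)$ reproduces both prefactors exactly. Disentangling these two pieces inside $\Box_{cg}\psi$ and correctly matching the conformal weight is where the bookkeeping is heaviest, and this is the step I expect to be the main obstacle.

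Finally, for general $x$ near the ray I would Taylor expand $a_0$ about $\tilde{x}(t)$: the value on the ray is the product of prefactors just computed, and every higher-order term carries a factor of $|x-\tilde{x}(t)|$, giving the error $\mathcal{O}(|x-\tilde{x}(t)|)$ in the statement. Since the full construction, including convergence of the iterative scheme for higher-order beams and the regularity of $a$ and $\psi$, is carried out in \cite{waterss}, I would cite it for those technical points and present only the transport analysis in detail.
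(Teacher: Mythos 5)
Your proposal is essentially correct, but it is worth saying plainly that the paper contains no proof of this corollary at all: the statement is recalled verbatim from \cite{waterss}, and the citation is the entire argument. Your sketch therefore supplies strictly more than the text does, and it follows the standard Gaussian-beam route that \cite{waterss} itself uses: expand in powers of $\lambda$, kill the $\lambda^2$ term with the eikonal equation (solved only to high order in $|x-\tilde{x}(t)|$ along the ray, consistent with the perturbed eikonal equations the paper writes later), and read off the amplitude from the $\lambda^1$ transport equation restricted to the ray. Two small points of bookkeeping. First, the ODE on the ray is really $\frac{d}{dt}a_0+\frac{1}{2h}\sparen{\Box_{cg}\psi}a_0=0$; the factor $h^{-1}$ you dropped is harmless only because of the normalization $\psi_t=1$ on the ray (Corollary \ref{boundarybeams}), so you should say so. Second, from the Jacobi system $\dot{Y}=h_{pp}Z+h_{px}Y$ one gets $\frac{d}{dt}\log\det Y=\operatorname{tr}(h_{pp}M)+\operatorname{tr}(h_{px})$, not $\operatorname{tr}(h_{pp}M)$ alone; the extra $\operatorname{tr}(h_{px})$ term is precisely part of what recombines with the density $\sqrt{\det(cg)}=c^{n/2}(\det g)^{1/2}$ to produce the $(c\,g)^{1/4}$ prefactor, so the ``disentangling'' you correctly flag as the main obstacle must account for it. With that repaired, your transport analysis reproduces the stated formula, and deferring the regularity and higher-order iterative construction to \cite{waterss} is exactly what the paper does.
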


\begin{cor}\label{cutoff}
For $\epsilon_1 \in (0,1)$, there exists a cutoff function $\chi_{\epsilon_1}\in C^{\infty}([0,T]\times\mathcal{M})$ such that for $\alpha>1$: 
\begin{align*}
&\chi_{\epsilon_1}(t,x)=0 \qquad \mathrm{if} \qquad (t,x)\in \{(s, y): \exists r\in [0,T] \ \text{s.t.}\  |s-r|+d_g(y,\tilde{x}(r))>2^{\frac{1}{n}}\epsilon_1^{\frac{1}{2n\alpha}}\}, \\& 
\chi_{\epsilon_1}(t,x)=1  \qquad \mathrm{if} \qquad (t,x)\in \{(s,y): \exists r\in [0,T]\ \text{s.t.}\  |s-r|+d_g(y,\tilde{x}(r))<\epsilon_1^{\frac{1}{2n\alpha}}\},\end{align*}
and furthermore for $m\in \mathbb{N}$: 
\begin{align*}
\sup\limits_{t,x}|\nabla_g^{m}(\chi_{\epsilon_1}(t,x))|<C\epsilon_1^{-\frac{m}{2\alpha}}
\end{align*}
for a constant $C$ which is independent of $\epsilon_1$ and $n=\mathrm{dim}(\mathcal{M})$.
\end{cor}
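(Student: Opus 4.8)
The plan is to build $\chi_{\epsilon_1}$ as a fixed smooth one--variable profile composed with the space--time distance to the beam centre curve $\Gamma=\{(r,\tilde x(r)):r\in[0,T]\}$. Define
$$\rho(s,y)=\inf_{r\in[0,T]}\sparen{\sabs{s-r}+d_g\sparen{y,\tilde x(r)}},$$
which is exactly the quantity governing the two displayed sets. The key elementary fact is that $\rho$ is $1$--Lipschitz: the triangle inequality for $d_g$ gives $\sabs{\nabla_g\rho}\le1$ wherever $\rho$ is differentiable, and likewise $\sabs{\partial_s\rho}\le1$. Fix once and for all $\phi\in C^\infty(\mathbb{R})$ with $0\le\phi\le1$, $\phi\equiv1$ on $(-\infty,1]$ and $\phi\equiv0$ on $[2^{1/n},\infty)$, put $\delta=\epsilon_1^{1/(2n\alpha)}$, and set $\chi_{\epsilon_1}=\phi(\rho/\delta)$. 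Then the two pointwise requirements hold by construction: $\chi_{\epsilon_1}\equiv1$ on $\{\rho<\delta\}$ and $\chi_{\epsilon_1}\equiv0$ on $\{\rho>2^{1/n}\delta\}$, which are precisely the sets in the statement.

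Next I would localize the analysis to where $\phi'$ lives. Since $\phi'$ is supported in $\{1\le\rho/\delta\le2^{1/n}\}$, only the behaviour of $\rho$ in the thin annular region $\{\delta\le\rho\le2^{1/n}\delta\}$ around $\Gamma$ is relevant. For $\epsilon_1$ small this annulus sits inside a tubular neighbourhood of $\Gamma$ of radius below the injectivity radius, where $\rho$ is smooth away from $\Gamma$ and the classical bounds for iterated derivatives of a distance function apply, namely $\sabs{\nabla_g^k\rho}\le C_k\,\rho^{-(k-1)}$. As $\rho\sim\delta$ on $\operatorname{supp}\phi'$, this gives $\sabs{\nabla_g^k\rho}\le C_k\,\delta^{-(k-1)}$ there.

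The derivative estimate now follows from the Fa\`a di Bruno chain rule. Writing $u=\rho/\delta$, every term of $\nabla_g^m[\phi(u)]$ is $\phi^{(j)}(u)$ times a product $\prod_{i=1}^{j}\nabla_g^{k_i}u$ with each $k_i\ge1$ and $\sum_i k_i=m$. Since $\nabla_g^{k_i}u=\delta^{-1}\nabla_g^{k_i}\rho$ and $\sabs{\nabla_g^{k_i}\rho}\le C_{k_i}\delta^{-(k_i-1)}$ on the transition annulus, each factor is bounded by $C\delta^{-k_i}$ and the product by $C\delta^{-m}$. Hence $\sup_{t,x}\sabs{\nabla_g^m\chi_{\epsilon_1}}\le C\delta^{-m}=C\epsilon_1^{-m/(2n\alpha)}\le C\epsilon_1^{-m/(2\alpha)}$, the final inequality because $\epsilon_1<1$ and $n\ge1$; this is the asserted bound, with $C$ depending only on $\phi$ and $m$, independently of $\epsilon_1$ and $n$.

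The main obstacle is that $\rho$ is only genuinely Lipschitz: it is singular on $\Gamma$, and --- because $\sabs{s-r}+d_g$ is an $\ell^1$ rather than a Riemannian distance and the infimum runs over the closed interval $[0,T]$ --- it can have ridges where the minimizing $r$ is non-unique or is an endpoint. Thus the delicate point is to upgrade $\chi_{\epsilon_1}$ from Lipschitz to $C^\infty$. I would resolve this by replacing $\rho$ by a mollification $\rho_\sigma$ at a scale $\sigma\ll\delta$ (carried out in local coordinates with a partition of unity): mollification keeps the $1$--Lipschitz bound, perturbs values by at most $\sigma$ --- absorbed into the gap $(2^{1/n}-1)\delta$ between the two thresholds --- and produces a bona fide smooth function whose derivatives obey the same $\delta^{-(k-1)}$ scaling on the annulus. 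Composing $\phi$ with $\rho_\sigma$ then yields a $C^\infty$ cutoff satisfying all three conclusions.
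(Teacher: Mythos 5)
The paper itself offers no proof of this corollary: it is recalled from \cite{waterss} as a by-product of the Gaussian beam construction, so there is no in-text argument to compare yours against. Your construction --- compose a fixed smooth profile with the space--time distance $\rho(s,y)=\inf_{r\in[0,T]}\bigl(|s-r|+d_g(y,\tilde x(r))\bigr)$ to the beam axis, rescaled by $\delta=\epsilon_1^{1/(2n\alpha)}$ --- is the standard one, and you correctly read the two displayed conditions as $\{\rho<\delta\}$ and $\{\rho>2^{1/n}\delta\}$ (as literally written, with ``$\exists r$'' in both, the two sets overlap; the first must be ``for all $r$''). The pointwise properties and the first-derivative bound are fine, and you rightly identify the only real difficulty: $\rho$ is merely Lipschitz on the transition annulus, since it is an $\ell^1$-type distance to a curve (with ridge sets and endpoint effects), so the Fa\`a di Bruno computation using $|\nabla_g^k\rho|\le C_k\rho^{-(k-1)}$ is only heuristic there and the actual proof is whatever smoothing you do afterwards.

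The gap is in that smoothing step. If you mollify $\rho$ at a scale $\sigma\ll\delta$, the higher derivatives of $\rho_\sigma$ do \emph{not} ``obey the same $\delta^{-(k-1)}$ scaling'': mollifying a $1$-Lipschitz function at scale $\sigma$ gives $|\nabla^k\rho_\sigma|\le C_k\sigma^{-(k-1)}$, and this is sharp (mollify $|x|$ near the origin). Feeding this into Fa\`a di Bruno, a term with $j$ factors contributes $\delta^{-j}\sigma^{-(m-j)}$, and the worst case $j=1$ gives $\delta^{-1}\sigma^{-(m-1)}$, which blows past $\delta^{-m}$ when $\sigma\ll\delta$; so the conclusion does not follow as written. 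Two clean repairs: (i) mollify at a fixed fraction of $\delta$, say $\sigma=\tfrac14(2^{1/n}-1)\delta$ (or equivalently mollify the indicator of $\{\rho\le\tfrac12(1+2^{1/n})\delta\}$ at that scale); the value perturbation is still absorbed in the gap between the thresholds and you obtain the stronger bound $C\delta^{-m}=C\epsilon_1^{-m/(2n\alpha)}$, which implies the stated one since $\epsilon_1<1$; or (ii) keep $\sigma$ small but no smaller than $\epsilon_1^{1/(2\alpha)}=\delta^{\,n}$, in which case $\delta^{-j}\sigma^{-(m-j)}\le\epsilon_1^{-(j/n+m-j)/(2\alpha)}\le\epsilon_1^{-m/(2\alpha)}$ for every $j$ --- exactly the exponent the corollary asserts, and presumably why the statement carries $2\alpha$ rather than $2n\alpha$ in the derivative bound. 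Either fix closes the argument; as written, the scale bookkeeping in the final step is incorrect.
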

Directly from Theorem \ref{gaussians} we observe the following corollary:
\begin{cor} \label{boundarybeams}
Given $\epsilon_1\in (0,1)$,  $(x_0,\omega_0)\in  \partial_+\mathcal{SM}$ and $t_0>0$, we can build a $0^{th}$-order Gaussian beam $U_{\lambda}$ which satisfies:
 \begin{enumerate}
 \item $a_0(t_0,x_0)=1$
 \item $\nabla_x\psi(t_0,x_0)=-\omega_0$ 
 \item $\psi_t(t_0,x_0)=1$
 \item $\mathrm{supp} \ U_{\lambda}(t_0,x)\subset B_{\epsilon_1^{\frac{1}{2n\alpha}}}(t_0,x_0)$.
 \end{enumerate}
\end{cor}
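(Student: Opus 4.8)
The plan is to read Corollary~\ref{boundarybeams} as a matter of fixing the two pieces of freedom left open by Theorem~\ref{gaussians} --- the Cauchy data of the central ray $(\tilde x(t),\omega(t))$ and the free initial amplitude $a(0,\cdot)$ --- so that all four normalizations hold at the single point $(t_0,x_0)$. First I would launch the central curve through the prescribed point: since \eqref{systemomega} is a well-posed system of ODEs in phase space, I may prescribe its data at $t=t_0$ instead of at $t=0$ and integrate both backward and forward over $[0,T]$. I set $\tilde x(t_0)=x_0$ and take $\omega(t_0)$ to be the covector dual to $-\omega_0$, normalized to the energy surface $\{h=1\}$. Assumption~\ref{geoass} and the admissibility $c\in\mathcal{A}$ guarantee that the resulting curve stays in $[0,T]\times\mathcal{M}$ and satisfies the hypotheses under which Theorem~\ref{gaussians} produces $\psi$ and $a$.

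Properties (2) and (3) are then nothing but the Cauchy data of the phase along this ray. By the Gaussian beam recipe the real phase satisfies the Hamilton--Jacobi relation $\psi_t=h(t,x,\nabla_x\psi)$, with $\nabla_x\psi(t,\tilde x(t))=\omega(t)$ and $\tfrac{d}{dt}\psi(t,\tilde x(t))=0$, $\Im\psi(t,\tilde x(t))=0$ coming from \eqref{systemomega}. Evaluating at $t=t_0$ gives $\nabla_x\psi(t_0,x_0)=\omega(t_0)=-\omega_0$, which is (2); the relation $\psi_t=h$ together with the normalization $h(t_0,x_0,\omega(t_0))=1$ gives (3). The compatibility of the two conditions is exactly the requirement that $(x_0,\omega_0)$ sit on the unit (co)sphere bundle, i.e.\ on the characteristic variety of $\Box_{cg}$, which is built into $(x_0,\omega_0)\in\partial_+\mathcal{SM}$.

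For (1) I would invoke the transport formula of Corollary~\ref{size}. Along the central ray the remainder $\mathcal{O}(|x-\tilde x(t)|)$ vanishes, so at $x=\tilde x(t_0)=x_0$ the amplitude $a_0(t_0,x_0)$ equals a nonzero prefactor --- built from $\det Y(0)/\det Y(t_0)$ and the strictly positive quantities $c,g$ evaluated at the endpoints of the ray --- times the free initial value $a(0,\tilde x(0))$. Each factor is nonzero because $Y$ solves a linear matrix ODE and is invertible while $c\ge m_0>0$, $g>0$; hence I may choose $a(0,\tilde x(0))$ to be the reciprocal of the prefactor, forcing $a_0(t_0,x_0)=1$. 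Finally, property (4) is obtained by replacing $U_\lambda$ with $\chi_{\epsilon_1}U_\lambda$, where $\chi_{\epsilon_1}$ is the cutoff of Corollary~\ref{cutoff}: on the slice $t=t_0$ this product is supported where $d_g(x,\tilde x(t_0))=d_g(x,x_0)\lesssim\epsilon_1^{1/(2n\alpha)}$, i.e.\ inside $B_{\epsilon_1^{1/(2n\alpha)}}(t_0,x_0)$.

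The step I expect to be the main obstacle is checking that the localized object $\chi_{\epsilon_1}U_\lambda$ is still a genuine $0^{th}$-order Gaussian beam, i.e.\ that multiplying by the cutoff does not spoil the estimate $\sup|\Box_{cg}U_\lambda|\le C\lambda^{n/4}$ of Theorem~\ref{gaussians}. The only new contributions are the commutator terms $[\Box_{cg},\chi_{\epsilon_1}]U_\lambda$, supported in the transition region where $\chi_{\epsilon_1}$ has nonzero derivatives, at distance $\gtrsim\epsilon_1^{1/(2n\alpha)}$ from the ray. There the Gaussian weight $\Im\psi(t,x)\ge C(t)(x-\tilde x(t))^2$ makes $|U_\lambda|$ exponentially small, of order $\exp(-c\lambda\,\epsilon_1^{1/(n\alpha)})$, whereas the derivative bounds $|\nabla_g^m\chi_{\epsilon_1}|\lesssim\epsilon_1^{-m/(2\alpha)}$ grow only polynomially in $\epsilon_1^{-1}$. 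Balancing these --- keeping $\epsilon_1$ a fixed small constant while $\lambda\to\infty$ --- makes the commutator $o(\lambda^{n/4})$, so it is absorbed into the error and the cut beam obeys the same bound. This is the only place where the quantitative content of Corollaries~\ref{size} and~\ref{cutoff} is genuinely used; the remaining work is bookkeeping of the normalizations.
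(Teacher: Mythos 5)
Your proposal is correct and follows essentially the same route the paper intends: the paper offers no written proof beyond the remark that the corollary follows ``directly from Theorem \ref{gaussians}'', and your argument simply supplies the standard details --- launching the bicharacteristic through $(t_0,x_0)$ with momentum $-\omega_0$, reading (2) and (3) off the eikonal equation $\psi_t=h(t,x,\nabla_x\psi)$ on the unit sphere bundle, normalizing the free amplitude via Corollary \ref{size}, and localizing with the cutoff of Corollary \ref{cutoff}. Your closing check that the commutator $[\Box_{cg},\chi_{\epsilon_1}]U_\lambda$ is exponentially suppressed by the Gaussian decay $\Im\psi\geq C(t)(x-\tilde{x}(t))^2$ against the polynomial-in-$\epsilon_1^{-1}$ derivative bounds is exactly the point the paper leaves implicit, and it is handled correctly.
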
 

For the construction of the Ansatz as in \cite{waterss} on the manifold we also have the additional assumption that $cg$ does not have any closed loops, although many of the well posedness estimates are formulated without this assumption. 

Motivated by \cite{DDSF}, Section 6.1, we consider perturbations of the above Gaussian beam. Using the cut-off from Corollary \ref{cutoff}, one constructs localized actual solutions to the wave equation. Assume that $\psi_2$ and $\psi_3$ solve the eikonal equation with metric $g$, respectively $cg$, up to higher-order terms:
\begin{align*}
&|\nabla_g\psi_2|^2_g=g^{jk}\partial_{x_j}\psi_2\partial_{x_k}\psi_2=(\partial_t \psi_2)^2 +\mathcal{O}(|x-x(t)|^5), \\& 
 |\nabla_{cg}\psi_3|^2_{cg}=cg^{jk}\partial_{x_j}\psi_3\partial_{x_k}\psi_3=(\partial_t \psi_3)^2+\mathcal{O}(|x-\tilde{x}(t)|^5). 
\end{align*}
Here $x(t)$ is the curve associated to the Hamiltonian $h$ with conformal factor $c=1$. We similarly consider solutions $a_2, a_3$ of perturbed transport equations:
\begin{align*}
&\partial_t \psi_2 \partial_ta_2-g^{jk}\partial_{x_j}\psi_2\partial_{x_k}a_2+\frac{a_2}{2}(\partial_t^2-\nabla_g)\psi_2=\mathcal{O}(|x-x(t)|^3),\\ 
&\partial_t \psi_3\partial_ta_3-cg^{jk}\partial_{x_j}\psi_3\partial_{x_k}a_3+\frac{a_3}{2}(\partial_t^2-\nabla_{cg})\psi_3=-\phi_0 a_2+\mathcal{O}(|x-\tilde{x}(t)|^3). 
\end{align*}
Here $$\phi_0(t,x)=-\frac{1-c^{-1}}{2i} g^{jk}\partial_{x_j}\psi_2(t,x)\partial_{x_k}\psi_2(t,x) e^{i\lambda (\psi_2(t,x) - \psi_3(t,x))}.$$ As $c \in \mathcal{A}$, $\|1-c^{-1}\|_{C^1} \leq C \epsilon$. The well-posedness of the transport equation implies that in the norm $\norm{A}_{*, \mathcal{M}} =\norm{A}_{H^1(0,T, H^2(\mathcal{M}))} + \norm{A}_{H^3(0,T, L^2(\mathcal{M}))}$,
\begin{align*}
\norm{a_3}_{*, \mathcal{M}} \leq C\epsilon\lambda^2 \norm{a_2}_{*, \mathcal{M}} + C
\end{align*}
and in the tube $\mathcal{T} = \mathrm{supp}\ \chi_{\epsilon_1}$, 
\begin{align*}
\norm{a_3}_{*, \mathcal{T}} \leq C\epsilon\lambda^2 \norm{a_2}_{*, \mathcal{T}} + o_{\epsilon_1}(1).
\end{align*}


We consider a modification of Lemma 6.2 in \cite{DDSF}: 
\begin{lem}\label{perturblemma}
Let $c\in \mathcal{A}$ be such that $\epsilon$ is sufficiently small and $c=1$ near the boundary $\partial\mathcal{M}$. The problem
\begin{align*}
(\partial_t^2-\Delta_{cg})u=0 \qquad \textrm{in} \qquad [0,T]\times\mathcal{M}, \qquad u(0,x)=\partial_tu(0,x)=0,
\end{align*}
has a solution of the form
\begin{align*}
u_2(t,x)=\frac{1}{\lambda}a_2(t,x)\exp(i\lambda\psi_2(t,x))\chi_{\epsilon_1}(t,x)+a_3(t,x)\exp(i\lambda\psi_3(t,x)){\chi}_{\epsilon_1}(t,x)+R_{\lambda}(t,x).
\end{align*}
When $\lambda$ is sufficiently large, the remainder $R_\lambda$ satisfies the estimate \begin{align*}
&\sup_{t\in[0,T]}\sparen{\lambda\norm{R_{\lambda}}_{L^2(\mathcal{M})}+\norm{\nabla_gR_{\lambda}}_{L^2(\mathcal{M})}+\norm{\partial_tR_{\lambda}}_{L^2(\mathcal{M})} } \\& \qquad\leq C\sparen{\epsilon\lambda^2+\frac{1}{\lambda}}\norm{a_2}_* + o_{\epsilon_1}(1) + \mathcal{O}\left( e^{-\lambda \epsilon_1^{1/2n\alpha}}\frac{\epsilon_1^{-1/\alpha}}{\lambda}\right). 
\end{align*}
The constant $C$ depends only on $T$ and $\mathcal{M}$. 
\end{lem}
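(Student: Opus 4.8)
The plan is to treat the two-beam expression as an approximate solution whose defect is absorbed into $R_\lambda$ through the energy estimate of Lemma \ref{energyestimate}. Write $v=v_2+v_3$ with $v_2=\frac1\lambda a_2 e^{i\lambda\psi_2}\chi_{\epsilon_1}$ and $v_3=a_3 e^{i\lambda\psi_3}\chi_{\epsilon_1}$, and set $R_\lambda:=u_2-v$, where $u_2$ is the exact solution to the forward problem. Because the central ray $\tilde x(t)$ enters through $\partial\mathcal{M}$, the space-time localization of Corollary \ref{cutoff} together with Corollary \ref{boundarybeams} lets me arrange that $v$ and $\partial_t v$ vanish in $\mathcal{M}$ at $t=0$, so that $R_\lambda$ solves the inhomogeneous problem \eqref{hyperbolic2} with source $F=-\Box_{cg}v$ and zero Cauchy data. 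It then suffices to bound $\norm{\Box_{cg}v}_{L^1([0,T];L^2(\mathcal{M}))}$ and to invoke \eqref{energybound}.

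First I would apply $\Box_{cg}$ to each WKB factor and sort the result by powers of $\lambda$. For $v_3$ the $\lambda^2$ coefficient is the eikonal defect of $\psi_3$ with respect to $cg$, which by hypothesis is $\mathcal{O}(|x-\tilde x(t)|^5)$, while the $\lambda^1$ coefficient is the $cg$-transport operator applied to $a_3$. For $v_2$ the $\lambda^2$ coefficient carries the eikonal defect of $\psi_2$ measured in the $cg$-metric; since $\psi_2$ only solves the $g$-eikonal, this defect equals, up to $\mathcal{O}(|x-x(t)|^5)$, a multiple of $(1-c^{-1})g^{jk}\partial_{x_j}\psi_2\partial_{x_k}\psi_2$, and after the $1/\lambda$ prefactor it produces a term of order $\lambda$. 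The crux is that $\phi_0$ is engineered precisely so that the source $-\phi_0 a_2$ in the $a_3$ transport equation generates, through the phase shift $e^{i\lambda(\psi_2-\psi_3)}$, a contribution to $\Box_{cg}v_3$ that matches the phase $e^{i\lambda\psi_2}$ and cancels this order-$\lambda$ term of $v_2$. I would verify this cancellation to leading order in $\epsilon$ explicitly; this is the step on which the whole construction turns.

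What remains of $\Box_{cg}v$ then splits into three families. The amplitude terms $\frac1\lambda(\Box_{cg}a_2)e^{i\lambda\psi_2}\chi_{\epsilon_1}$ and $(\Box_{cg}a_3)e^{i\lambda\psi_3}\chi_{\epsilon_1}$ are controlled in $L^1L^2$ by $\frac1\lambda\norm{a_2}_{*}$ and $\norm{a_3}_{*}$ respectively; inserting the tube bound $\norm{a_3}_{*,\mathcal{T}}\leq C\epsilon\lambda^2\norm{a_2}_{*,\mathcal{T}}+o_{\epsilon_1}(1)$ converts the second into the $\epsilon\lambda^2$ and $o_{\epsilon_1}(1)$ contributions. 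The eikonal and transport higher-order residuals, namely $\mathcal{O}(|x-\tilde x|^5)$ at order $\lambda^2$ and $\mathcal{O}(|x-\tilde x|^3)$ at order $\lambda$, are integrated against the Gaussian weight $e^{-2\lambda\,\Im\psi}$ over the tube, so each power $|x-\tilde x|$ costs a factor $\lambda^{-1/2}$ and the net contributions stay within the admissible size. Finally, the commutator terms in which a derivative falls on $\chi_{\epsilon_1}$ are supported on the annulus where $\Im\psi\gtrsim\epsilon_1^{1/n\alpha}$; there $|e^{i\lambda\psi}|\leq e^{-\lambda\epsilon_1^{1/2n\alpha}}$, while the cutoff derivatives contribute $\epsilon_1^{-1/\alpha}$ by Corollary \ref{cutoff}, which produces the term $\mathcal{O}\bigl(e^{-\lambda\epsilon_1^{1/2n\alpha}}\epsilon_1^{-1/\alpha}/\lambda\bigr)$.

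Assembling these estimates bounds $\norm{\Box_{cg}v}_{L^1([0,T];L^2(\mathcal{M}))}$ by $\lambda^{-1}$ times the right-hand side of the asserted inequality. Applying Lemma \ref{energyestimate} with zero Cauchy data then gives $\sup_t\bigl(\norm{\nabla_g R_\lambda}_{L^2(\mathcal{M})}+\norm{\partial_t R_\lambda}_{L^2(\mathcal{M})}\bigr)\leq C\norm{\Box_{cg}v}_{L^1L^2}$, and integrating $\partial_t R_\lambda$ in time yields $\sup_t\norm{R_\lambda}_{L^2(\mathcal{M})}\leq CT\norm{\Box_{cg}v}_{L^1L^2}$; multiplying this last bound by $\lambda$ and combining the three pieces recovers exactly the stated estimate. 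I expect the main difficulty to be the bookkeeping of the designed cancellation in the second step, in particular tracking the conformal factor so that the $\phi_0$-generated term annihilates the $v_2$ eikonal mismatch to leading order in $\epsilon$, together with the passage from the pointwise $\mathcal{O}(|x-\tilde x|^k)$ remainders to $L^2$ smallness via the Gaussian concentration of the beam. Keeping the $\epsilon\lambda^2$, the $o_{\epsilon_1}(1)$ and the exponentially small contributions separate and at their advertised sizes requires care, but each becomes routine once the cancellation is in place.
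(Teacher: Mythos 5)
Your overall architecture coincides with the paper's: you set $R_\lambda=u_2-v$ with $v$ the two--beam ansatz, identify $F=-\Box_{cg}v$, expand in powers of $\lambda$, use the $\phi_0$--engineered cancellation of the order--$\lambda$ eikonal mismatch of $\psi_2$ in the $cg$--metric (this cancellation does work out exactly as you describe, via the phase factor $e^{i\lambda(\psi_2-\psi_3)}$ in $\phi_0$), absorb the cutoff commutators into the exponentially small term, and close with Lemma \ref{energyestimate}. The treatment of the $\lambda^2$ and transport residuals, of $\|a_3\|_{*}\leq C\epsilon\lambda^2\|a_2\|_{*}+o_{\epsilon_1}(1)$, and of the derivatives of $\chi_{\epsilon_1}$ all matches the paper.

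There is, however, one concrete gap in the final bookkeeping, precisely at the term the lemma weights most heavily, namely $\lambda\|R_\lambda\|_{L^2}$. You assert that $\|\Box_{cg}v\|_{L^1([0,T];L^2)}$ is bounded by $\lambda^{-1}$ times the right-hand side of the lemma, i.e.\ by $C(\epsilon\lambda+\lambda^{-2})\|a_2\|_{*}+\cdots$. This is false: after all cancellations the source still contains $e^{i\lambda\psi_3}(\Box_{cg}a_3)\chi_{\epsilon_1}$ (the paper's $k_2$), whose $L^1L^2$ size is $\sim\|a_3\|_{*}\sim\epsilon\lambda^2\|a_2\|_{*}$ — the full right-hand side, not $\lambda^{-1}$ times it; the paper's own computation records $\|F\|_{L^2([0,T]\times\mathcal{M})}\leq C(\lambda^{-1}+\epsilon+\epsilon\lambda^2)\|a_2\|_{*}+\cdots$. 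Consequently your route for the zeroth-order norm, $\sup_t\|R_\lambda\|_{L^2}\leq CT\|F\|_{L^1L^2}$ followed by multiplication by $\lambda$, produces $\epsilon\lambda^3\|a_2\|_{*}$ rather than the claimed $\epsilon\lambda^2\|a_2\|_{*}$. The paper gains the missing factor $\lambda^{-1}$ for $\|R_\lambda\|_{L^2}$ (but not for $\|\nabla_gR_\lambda\|_{L^2}$ or $\|\partial_tR_\lambda\|_{L^2}$) by exploiting the oscillation of the source in time: one integrates by parts in $s$ against the phases $e^{i\lambda\psi_j}$ inside Duhamel's formula, which is why its bound for $\sup_t\|R_\lambda\|_{L^2}$ carries the prefactor $C/\lambda$ together with terms involving $\partial_sk_j$, and why the norm $\|\cdot\|_{*}$ is built to control $H^3(0,T;L^2(\mathcal{M}))$. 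Your argument needs this nonstationary-phase step (or an equivalent device) to recover the stated weight $\lambda\|R_\lambda\|_{L^2}$; the rest of the proposal is sound.
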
 
For the proof, we use the energy estimate \eqref{energyestimate} for the solution $R_{\lambda}$ of the inhomogeneous equation
\begin{align}
\Box_{cg}R_{\lambda}(t,x)=(\partial_t^2-\Delta_{cg})R_{\lambda}(t,x)=F(t,x),
\end{align}
for a right hand side $F \in L^1([0,T]; L^2(\mathcal{M}))$ and zero initial and boundary conditions.
In our case
\begin{align*}
-F(t,x)&= \frac{1}{\lambda}\exp(i\lambda\psi_2)\Box_{cg}a_2\chi_{\epsilon_1}\\&\qquad+2i\exp(i\lambda\psi_2)\chi_{\epsilon_1}\sparen{-\partial_t \psi_2 \partial_t a_2+c^{{-1}}g^{jk}\partial_{x_j}\psi_2\partial_{x_k}a_2+\frac{a_2}{2}\Delta_{cg}\psi_2 - \frac{a_2}{2}\partial_t^2 \psi_2 }\\&\qquad+
\lambda a_2\exp(i\lambda\psi_2)\chi_{\epsilon_1}\sparen{(\partial_t \psi_2)^2-c^{-1}g^{jk}\partial_{x_j}\psi_2\partial_{x_k}\psi_2}+\exp(i\lambda\psi_3)\Box_{cg}a_3\tilde{\chi}_{\epsilon_1}\\&\qquad+
2i\lambda\exp(i\lambda\psi_3){\chi}_{\epsilon_1}\sparen{-\partial_t \psi_3 \partial_ta_3+cg^{jk}\partial_{x_j}\psi_3\partial_{x_k}a_3+\frac{a_3}{2}\Delta_{cg}\psi_3-\frac{a_3}{2}\partial_t^2\psi_3}\\&\qquad+\lambda^2a_3\exp(i\lambda\psi_3){\chi}_{\epsilon_1}\sparen{(\partial_t \psi_3)^2-cg^{jk}\partial_{x_j}\psi_3\partial_{x_k}\psi_3} + E(t,x).
\end{align*}
Here $E(t,x)$ contains the lower-order terms which involve derivatives of $\chi_{\epsilon_1}$ and is of order $\mathcal{O}\left( e^{-\lambda \epsilon_1^{1/2n\alpha}}\epsilon_1^{-1/\alpha}\right)$. Using the eikonal equation for $\psi_2, \psi_3$ and the transport equation for $a_2$,
\begin{align*}
-F(t,x)&= \frac{1}{\lambda}\exp(i\lambda\psi_2)\Box_{cg}a_2\chi_{\epsilon_1}+\exp(i\lambda\psi_3)\Box_{cg}a_3\chi_{\epsilon_1}\\&\qquad+2i\exp(i\lambda\psi_2)\chi_{\epsilon_1}\sparen{(c^{-1}-1)g^{jk}\partial_{x_j}\psi_2\partial_{x_k}a_2+\frac{a_2}{2}(\Delta_{cg}-\Delta_g)\psi_2+\mathcal{O}(|x-x(t)|^3)}\\&\qquad+
\lambda a_2\exp(i\lambda\psi_2)\chi_{\epsilon_1}\sparen{(1-c^{-1})g^{jk}\partial_{x_j}\psi_2\partial_{x_k}\psi_2 +\mathcal{O}(|x-x(t)|^5)}\\&\qquad+
2i\lambda\exp(i\lambda\psi_3)\chi_{\epsilon_1}\sparen{-\partial_t \psi_3 \partial_ta_3+cg^{jk}\partial_{x_j}\psi_3\partial_{x_k}a_3+\frac{a_3}{2}\Delta_{cg}\psi_3-\frac{a_3}{2}\partial_t^2\psi_3}\\&\qquad+\mathcal{O}(\lambda^2|x-\tilde{x}(t)|^5)+E(t,x).
\end{align*} 
Up to the error terms, denoted by $\tilde{E}$, the transport equation for $a_3$ results in
\begin{align*}
-F(t,x)&=\frac{1}{\lambda}\exp(i\lambda\psi_2)\Box_{cg}a_2\chi_{\epsilon_1}+\exp(i\lambda\psi_3)\Box_{cg}a_3\chi_{\epsilon_1}
\\&\qquad+2i\exp(i\lambda\psi_2)\chi_{\epsilon_1}\sparen{(c^{-1}-1)\langle{\nabla_g\psi_2,\nabla_ga_2\rangle}_g + \frac{a_2}{2}(\Delta_{cg}\psi_2-\Delta_g\psi_2)} + \tilde{E}(t,x)\\& =
\frac{1}{\lambda}\exp(i\lambda\psi_2)k_0+\exp(i\lambda\psi_2)k_1+\exp(i\lambda\psi_3)k_2 + \tilde{E}(t,x).
\end{align*}
As
\begin{align*}
k_j\in L^1([0,T];L^2(\mathcal{M})) ,
\end{align*}
we obtain from the energy estimate \eqref{energyestimate} for the inhomogeneous wave equation, that
\begin{align*}
R_{\lambda}\in C^1([0,T];L^2(\mathcal{M}))\cap C([0,T];H_0^1(\mathcal{M}))
\end{align*}
and, up to a term from $\tilde{E}$,
\begin{align*}
\sup_{t\in[0,T]}\norm{R_{\lambda}}_{L^2(\mathcal{M})}&\leq \frac{C}{\lambda}\int_0^T\left\{\frac{\norm{k_0(s,\cdot)}_{L^2(\mathcal{M})}}{\lambda}+\norm{k_1(s, \cdot)}_{L^2(\mathcal{M})}+\norm{k_2(s, \cdot)}_{L^2(\mathcal{M})}\right\}\ ds\\
&\quad + C \int_0^T\left\{\frac{\norm{\partial_s k_0(s,\cdot)}_{L^2(\mathcal{M})}}{\lambda}+\norm{\partial_sk_1(s, \cdot)}_{L^2(\mathcal{M})}+\norm{\partial_s k_2(s, \cdot)}_{L^2(\mathcal{M})}\right\}\ ds.
\end{align*}
The contribution from $\tilde{E}$ can be seen to be $o_{\epsilon_1}(1) + \mathcal{O}\left( e^{-\lambda \epsilon_1^{1/2n\alpha}}\frac{\epsilon_1^{-1/\alpha}}{\lambda}\right)$.
By the definition of the $k_j$, the right hand side is bounded by
$$\frac{C}{\lambda}\left(\frac{1}{\lambda}\norm{a_2}_*+\epsilon\norm{a_2}_*+\epsilon\lambda^2\norm{a_2}_*+o_{\epsilon_1}(1)\right) \ .$$
Similarly,
\begin{align*}
\norm{F}_{L^2([0,T]\times\mathcal{M})}\leq C\sparen{\frac{1}{\lambda}\norm{a_2}_*+\epsilon\norm{a_2}_*+\epsilon\lambda^2\norm{a_2}_*}+ o_{\epsilon_1}(1) + \mathcal{O}\left( e^{-\lambda \epsilon_1^{1/2n\alpha}}\frac{\epsilon_1^{-1/\alpha}}{\lambda}\right).
\end{align*}
Applying the energy estimate again, we obtain
\begin{align*}
\sup_{t\in[0,T]}\sparen{\norm{\nabla R_{\lambda}}_{L^2(\mathcal{M})}+\norm{\partial_tR_{\lambda}}_{L^2(\mathcal{M})}}\leq C\sparen{\epsilon\lambda^2+\frac{1}{\lambda}}\norm{a_2}_{*}+ o_{\epsilon_1}(1) + \mathcal{O}\left( e^{-\lambda \epsilon_1^{1/2n\alpha}}\frac{\epsilon_1^{-1/\alpha}}{\lambda}\right).
\end{align*}
This finishes the proof of Lemma \ref{perturblemma}.\\

Given  $c\in \mathcal{A}$, we now denote
\begin{align*}
&\rho_0(t,x)=1-c(t,x), \\& \rho_1(t,x)=c^{\frac{n}{2}}(t,x)-1, \qquad \rho_2(t,x)=c^{\frac{n}{2}}(t,x)-1, \\&
\rho(t, x)=\rho_1(t,x)-\rho_2(t, x)=c^{\frac{n}{2}-1}(t,x)(c(t,x)-1).
\end{align*}
From the definitions, we observe that there exists $C>0$ with
\begin{align*}
&\norm{\rho_j}_{C^1([0,T]\times\mathcal{M})}\leq C\norm{\rho_0}_{C([0,T]\times\mathcal{M})}, \qquad j=1,2, \\&
C^{-1}\norm{\rho_0}_{L^2([0,T]\times\mathcal{M})}\leq \norm{\rho}_{L^2([0,T]\times\mathcal{M})}\leq C.
\end{align*}

The following key identity relates the difference of the Dirichlet--Neumann operators to the solutions of the corresponding wave equations in the interior: 
\begin{lem}\label{keyidentity}
Let $T>0$ and $c\in \mathcal{A} \cap C^{\infty}([0,T]\times\mathcal{M})$ be such that $c=1$ near the boundary $\partial\mathcal{M}$. Assume that $u_1$ and $u_2$ solve the following problems in $[0,T]\times\mathcal{M}$ for $f_1,f_2\in H^1_0([0,T]\times\partial\mathcal{M})$: 
\begin{align*}
\partial_t^2-\Delta_g u_1&=0 & \mathrm{in} \ [0,T]\times\mathcal{M}, \\
u_1(0,x)=\partial_tu_1(0,x)&=0 & \mathrm{in} \ \mathcal{M}, \\
u_1&=f_1 & \mathrm{on} \ (0,T)\times\partial\mathcal{M}, 
\end{align*}
and
\begin{align*}
\partial_t^2-\Delta_{cg} u_2&=0 & \mathrm{in} \ [0,T]\times\mathcal{M}, \\
u_2(T,x)=\partial_tu_2(T,x)&=0 & \mathrm{in} \ \mathcal{M}, \\
u_2&=f_2 & \mathrm{on} \ (0,T)\times\partial\mathcal{M}. 
\end{align*}
We then have the following identity:
\begin{align*}
&\int\limits_0^T\int\limits_{\partial\mathcal{M}}(\Lambda_g-\Lambda_{cg})f_1\overline{f_2}\,d\sigma_g^{n-1}\,dt\\&\qquad =
\int\limits_0^T\int\limits_{\mathcal{M}}\rho_1(x)\partial_tu_1\partial_t\overline{u}_2\,d_gV\,dt
-\int\limits_0^T\int\limits_{\mathcal{M}}\rho_2(t,x)\langle \nabla_g u_1(t,x), \nabla_g \overline{u}_2(t,x)\rangle_g \,d_gV\,dt .
\end{align*} 
\end{lem}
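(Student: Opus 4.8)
The plan is to derive the identity by the standard integral-identity method: convert each of the two boundary pairings $\int_0^T\int_{\partial\mathcal{M}}\Lambda_g f_1\overline{f_2}\,d\sigma_g^{n-1}\,dt$ and $\int_0^T\int_{\partial\mathcal{M}}\Lambda_{cg}f_1\overline{f_2}\,d\sigma_g^{n-1}\,dt$ into interior integrals by Green's formula, and subtract. The structural inputs are that $u_1$ carries zero Cauchy data at $t=0$ while $u_2$ carries zero Cauchy data at $t=T$, so the two solutions run in opposite time directions, and that $c\equiv 1$ near $\partial\mathcal{M}$, so that the two conormal derivatives $\nu_k g^{kl}\partial_{x_l}$ and $\nu_k c g^{kl}\partial_{x_l}$ and the two induced surface measures agree on $\partial\mathcal{M}$. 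This last point is what makes the boundary contributions of the two computations directly comparable.

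First I would treat the $g$-term. Multiplying $\Box_g u_1=\partial_t^2 u_1-\Delta_g u_1=0$ by $\overline{u}_2$ and integrating over $[0,T]\times\mathcal{M}$ against $d_gV\,dt$, I integrate by parts once in space and once in time. The spatial integration by parts produces $\int\langle\nabla_g u_1,\nabla_g\overline{u}_2\rangle_g\,d_gV$ together with the boundary term $\int_{\partial\mathcal{M}}\Lambda_g f_1\,\overline{f_2}\,d\sigma_g^{n-1}$, while the temporal integration by parts produces $-\int\partial_t u_1\,\partial_t\overline{u}_2\,d_gV$; the time-endpoint terms vanish because $u_1(0)=\partial_t u_1(0)=0$ and $u_2(T)=\partial_t u_2(T)=0$. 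This yields
\[
\int_0^T\int_{\partial\mathcal{M}}\Lambda_g f_1\overline{f_2}\,d\sigma_g^{n-1}\,dt=\int_0^T\int_{\mathcal{M}}\sparen{-\partial_t u_1\,\partial_t\overline{u}_2+\langle\nabla_g u_1,\nabla_g\overline{u}_2\rangle_g}\,d_gV\,dt.
\]

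For the $cg$-term I would first use reciprocity for the $cg$-wave operator — pairing the forward $cg$-solution carrying boundary data $f_1$ against the backward solution $u_2$ and integrating by parts in space-time — to rewrite $\int_0^T\int_{\partial\mathcal{M}}\Lambda_{cg}f_1\overline{f_2}\,d\sigma_g^{n-1}\,dt=\int_0^T\int_{\partial\mathcal{M}}f_1\,\overline{\nu_k c g^{kl}\partial_{x_l}u_2}\,d\sigma_g^{n-1}\,dt$, the time-endpoint terms again dropping out by the complementary Cauchy data. I then repeat the previous step with $\Box_{cg}u_2=0$ and $u_1$ as test function, integrating against the Riemannian volume of the conformal metric, $d_{cg}V=c^{n/2}\,d_gV$. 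Self-adjointness of $\Delta_{cg}$ for this volume yields the Dirichlet form $\langle\nabla_{cg}u_1,\nabla_{cg}\overline{u}_2\rangle_{cg}\,d_{cg}V=c^{n/2-1}\langle\nabla_g u_1,\nabla_g\overline{u}_2\rangle_g\,d_gV$, while the time term carries the full weight $c^{n/2}\,d_gV$; since $c\equiv1$ on $\partial\mathcal{M}$ the boundary term is exactly $\int\Lambda_{cg}f_1\overline{f_2}$. This produces
\[
\int_0^T\int_{\partial\mathcal{M}}\Lambda_{cg}f_1\overline{f_2}\,d\sigma_g^{n-1}\,dt=\int_0^T\int_{\mathcal{M}}\sparen{-c^{n/2}\partial_t u_1\,\partial_t\overline{u}_2+c^{n/2-1}\langle\nabla_g u_1,\nabla_g\overline{u}_2\rangle_g}\,d_gV\,dt.
\]

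Subtracting the two displays and recalling $\rho_1=c^{n/2}-1$, $\rho_2=c^{n/2-1}-1$ gives the stated identity: the volume-form ratio $c^{n/2}$ produces $\rho_1$ on the $\partial_t u_1\partial_t\overline{u}_2$ term, and the cometric-contraction ratio $c^{n/2-1}$ produces $-\rho_2$ on the gradient term. The step I expect to be the main obstacle is the bookkeeping of the conformal weights in the $cg$-computation together with the vanishing of all endpoint and boundary contributions. Because $c$ is genuinely time dependent, the temporal integration by parts is against the $t$-dependent weight $c^{n/2}$, and one must check that the extra $\partial_t c$ contributions it generates either cancel against the corresponding terms from the reciprocity step or are arranged away by the hypotheses; meanwhile the assumption $c\equiv1$ near $\partial\mathcal{M}$ is precisely what guarantees that the two conormal derivatives and surface measures coincide there, so that the difference $\Lambda_g-\Lambda_{cg}$ appears cleanly on the left-hand side.
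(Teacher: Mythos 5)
Your overall strategy --- Green's identities for the two wave operators, subtraction, and the observation that $c\equiv 1$ near $\partial\mathcal{M}$ makes the two conormal derivatives and surface measures agree --- is the same route as the proof the paper relies on (the paper gives no argument of its own; it cites Lemma 6.1 of the time-independent reference \cite{DDSF} and asserts the proof carries over). Your computation of the $\Lambda_g$ pairing is correct, since $g$ is time independent. The gap is in the $\Lambda_{cg}$ half, and it sits exactly at the one point where this lemma differs from the cited result. Because $c$ depends on $t$, there is no space-time measure against which $\partial_t^2-\Delta_{cg}$ is formally self-adjoint: $\Delta_{cg}$ is symmetric for $d_{cg}V=c^{n/2}\,d_gV$, while $\partial_t^2$ is symmetric only against a $t$-independent weight. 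Consequently \emph{both} of your temporal integrations by parts against $c^{n/2}$ leave uncancelled terms. Writing $v_1$ for the forward $cg$-solution with data $f_1$ (the solution that actually defines $\Lambda_{cg}f_1$), the reciprocity step produces
\begin{align*}
\int_0^T\!\int_{\partial\mathcal{M}}\Lambda_{cg}f_1\overline{f_2}\,d\sigma_g^{n-1}\,dt
=\int_0^T\!\int_{\partial\mathcal{M}}f_1\,\overline{\nu_k c g^{kl}\partial_{x_l}u_2}\,d\sigma_g^{n-1}\,dt
+\int_0^T\!\int_{\mathcal{M}}\partial_t\bigl(c^{\frac{n}{2}}\bigr)\bigl(v_1\,\partial_t\overline{u}_2-\partial_t v_1\,\overline{u}_2\bigr)\,d_gV\,dt,
\end{align*}
and the subsequent pairing of $\Box_{cg}u_2=0$ with $u_1$ against $d_{cg}V$ leaves a further $-\int_0^T\int_{\mathcal{M}}u_1\,\partial_t\bigl(c^{\frac{n}{2}}\bigr)\,\partial_t\overline{u}_2\,d_gV\,dt$. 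These remainders involve the distinct products $\partial_t v_1\,\overline{u}_2$, $v_1\,\partial_t\overline{u}_2$ and $u_1\,\partial_t\overline{u}_2$ and do not cancel against one another (further integration by parts in $t$ only trades them for $\partial_t^2(c^{n/2})$ terms); they vanish precisely when $c$ is time independent. You acknowledge this issue at the end but defer the check, and as the argument stands the check fails, so your second display --- and hence the final identity --- is off by these $\partial_t(c^{n/2})$ contributions.

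To close the argument you would need to do one of the following: exhibit an actual cancellation (which requires using more than the stated hypotheses), absorb the extra terms using the smallness $\norm{c-1}_{C^1}<\epsilon$ built into the class $\mathcal{A}$ (which turns the lemma into an identity up to controlled errors rather than an exact identity), or reorganize the computation so that no integration by parts in $t$ is performed against the weight $c^{n/2}$. As written, the proof is incomplete at the only step that the time-dependent setting makes nontrivial. A separate bookkeeping point: the paper's displayed definitions give $\rho_1=\rho_2=c^{n/2}-1$, which is inconsistent with $\rho=\rho_1-\rho_2=c^{n/2-1}(c-1)$; you silently use $\rho_2=c^{n/2-1}-1$, which is clearly the intended reading and is the one consistent with your gradient term.
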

See \cite{DDSF}, Lemma 6.1, for a proof. There the conformal factors are time independent, but the proof applies in this time dependent setting.

Substituting the Gaussian beam solutions into the key identity from Lemma \ref{keyidentity}, we conclude: 
\begin{lem}
Let $\epsilon>0$. There exists a constant $C>0$ such that for any $a_1,a_2\in H^1(\mathbb{R}_t^+,H^2(\mathcal{M}))$ satisfying the transport equation to leading order we have for all sufficiently large $\lambda$:
\begin{align*}
&\sabs{\int\limits_0^T\int\limits_{\mathcal{M}}\rho(t,x)a_1\overline{a}_2(t,x)\,d_gV\,dt}\\ &\qquad \qquad \leq C \norm{\rho_0}_{\mathcal{C}(\mathcal{M}\times[0,T])}\sparen{\lambda^{-1}+\epsilon\lambda^3}\norm{a_1}_*\norm{a_2}_*\\& \qquad \qquad \qquad+ C\lambda^3\norm{a_1}_*\norm{a_2}_*\norm{\Lambda_g-\Lambda_{cg}}_{H_0^1\rightarrow L^2}  \\& \qquad \qquad \qquad+\norm{\rho_0}_{\mathcal{C}(\mathcal{M}\times[0,T])}(\norm{a_1}_*+\norm{a_2}_*) \Big(o_{\epsilon_1}(\lambda ) + \mathcal{O}\left( e^{-\lambda \epsilon_1^{1/2n\alpha}}\epsilon_1^{-1/\alpha}\Big)\right).
\end{align*}
\end{lem}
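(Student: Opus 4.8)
The plan is to feed Gaussian beam solutions into the boundary identity of Lemma \ref{keyidentity} and to read off $\int_0^T\int_\mathcal{M}\rho\,a_1\overline{a}_2\,d_gV\,dt$ as the leading interior contribution, bounding everything else. Concretely, I would take $u_1$ to be a $g$-beam $u_1=(\lambda/\pi)^{n/4}a_1e^{i\lambda\psi_2}\chi_{\epsilon_1}$ concentrating on the central curve, and for $u_2$ the perturbed $cg$-solution produced in Lemma \ref{perturblemma}, whose profile is $a_3e^{i\lambda\psi_3}\chi_{\epsilon_1}$ together with the corrector $\tfrac1\lambda a_2e^{i\lambda\psi_2}\chi_{\epsilon_1}$ and the controlled remainder $R_\lambda$. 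Their boundary traces furnish $f_1,f_2\in H^1_0([0,T]\times\partial\mathcal M)$, and Lemma \ref{keyidentity} then expresses $\int_0^T\int_{\partial\mathcal M}(\Lambda_g-\Lambda_{cg})f_1\overline{f}_2$ through the two interior integrals weighted by $\rho_1$ and $\rho_2$. Rearranging, I would write
\[
\int_0^T\!\!\int_\mathcal{M}\rho\,a_1\overline{a}_2\,d_gV\,dt=\Big(\text{RHS of Lemma \ref{keyidentity}}\Big)+\mathcal E,
\]
where $\mathcal E$ collects all subleading pieces, and then estimate the right-hand side of Lemma \ref{keyidentity} by the operator norm $\norm{\Lambda_g-\Lambda_{cg}}_{H_0^1\to L^2}$.

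The identification of the main term is the conceptual step. Substituting the beams into $\rho_1\partial_tu_1\partial_t\overline{u}_2-\rho_2\langle\nabla_gu_1,\nabla_g\overline{u}_2\rangle_g$, the only non-oscillatory contribution arises from matching the phase $e^{i\lambda\psi_2}$ in $u_1$ against the $\tfrac1\lambda a_2e^{i\lambda\psi_2}$ corrector in $u_2$; the factor $e^{i\lambda(\psi_2-\overline{\psi_2})}=e^{-2\lambda\Im\psi_2}$ is, after the normalization $(\lambda/\pi)^{n/2}$, a transverse Gaussian of unit mass concentrating on the curve. On this diagonal the leading symbols are governed by the eikonal relations $(\partial_t\psi_2)^2=g^{jk}\partial_{x_j}\psi_2\partial_{x_k}\psi_2$ and $(\partial_t\psi_3)^2=cg^{jk}\partial_{x_j}\psi_3\partial_{x_k}\psi_3$, so that the $\rho_1$-weighted time-derivative term and the $\rho_2$-weighted gradient term recombine into the single weight $\rho=\rho_1-\rho_2$. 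This produces exactly $\int_0^T\int_\mathcal{M}\rho\,a_1\overline{a}_2\,d_gV\,dt$ up to amplitudes evaluated along the curve, with the remaining transverse corrections absorbed into the $o_{\epsilon_1}$ terms.

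It remains to bound the errors. For the boundary pairing I would use $\sabs{\int_0^T\int_{\partial\mathcal M}(\Lambda_g-\Lambda_{cg})f_1\overline{f}_2}\le\norm{\Lambda_g-\Lambda_{cg}}_{H_0^1\to L^2}\norm{f_1}_{H_0^1}\norm{f_2}_{L^2}$; since each derivative falling on $e^{i\lambda\psi}$ costs a factor $\lambda$, a direct trace computation for the beams gives $\norm{f_1}_{H_0^1}\norm{f_2}_{L^2}\le C\lambda^3\norm{a_1}_*\norm{a_2}_*$, which is the middle term of the claim. The interior error $\mathcal E$ splits into three types: the contributions of the remainder $R_\lambda$, estimated through Lemma \ref{perturblemma} and the energy bound \eqref{energybound}, which after pairing against the $O(\lambda)$ derivatives of $u_1$ yield the term $C\norm{\rho_0}_{\mathcal C}(\lambda^{-1}+\epsilon\lambda^3)\norm{a_1}_*\norm{a_2}_*$ (here I use $\norm{\rho_j}_{C^1}\le C\norm{\rho_0}_{\mathcal C}$ to factor out $\norm{\rho_0}_{\mathcal C}$); the non-resonant $\psi_2$--$\psi_3$ cross terms together with the transverse tube approximations, contributing $o_{\epsilon_1}(\lambda)$; and the cutoff commutators $E,\tilde E$ supported where $\chi_{\epsilon_1}$ varies, which are $\mathcal O\sparen{e^{-\lambda\epsilon_1^{1/2n\alpha}}\epsilon_1^{-1/\alpha}}$. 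Summing reproduces the stated inequality.

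I expect the main obstacle to be the bookkeeping of the powers of $\lambda$ in $\mathcal E$: one must verify that pairing the $\epsilon\lambda^2$-sized remainder $R_\lambda$ of Lemma \ref{perturblemma} against the $O(\lambda)$ time derivative of $u_1$ produces no worse than $\epsilon\lambda^3$, and simultaneously that the non-resonant cross terms, rather than the naive $O(\lambda^2)$ one might fear, are genuinely $o_{\epsilon_1}(\lambda)$ because $\psi_2$ and $\psi_3$ agree to high order along the common central curve and so cancel inside the tube. Getting the recombination of $\rho_1$ and $\rho_2$ into $\rho$ correct, with the right constant and no residual $\lambda$-power left on the main term, is the delicate accounting on which the whole estimate hinges.
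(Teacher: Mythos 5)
Your proposal is correct and follows essentially the same route as the paper, which itself only sketches the argument by deferring to \cite{DDSF}, Lemma 6.3, with Lemma \ref{perturblemma} replacing their Lemma 6.2: substitute the Gaussian beam for $g$ and the perturbed $cg$-solution into the key identity of Lemma \ref{keyidentity}, extract the main term $\int\rho\,a_1\overline{a}_2$ from the diagonal phase pairing via the eikonal recombination of $\rho_1,\rho_2$ into $\rho$, bound the boundary pairing by $\lambda^3\norm{\Lambda_g-\Lambda_{cg}}_{H_0^1\rightarrow L^2}$, and absorb the remainder, cross-term and cutoff contributions into the stated error terms. Your reconstruction is in fact more explicit than the paper's one-line proof, and you correctly flag the $\lambda$-power bookkeeping (in particular the $\epsilon\lambda^{2}$ size of $a_3$ and $R_\lambda$ feeding the $\epsilon\lambda^{3}$ term) as the only delicate point.
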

The proof follows the arguments of \cite{DDSF}, Lemma 6.3, using Lemma \ref{perturblemma} instead of Lemma 6.2.


\section{Stability estimate for conformal factors} 
We finally use the results of the previous section to deduce a stability estimate for the conformal factors.
\begin{lem}\label{lemmacon}
We have that
\begin{align*}
\sabs{\int\limits_0^T\rho(t,\tilde{x}(t))\,dt}&\leq C \norm{\rho_0}_{C(\mathcal{M}\times[0,T])}\sparen{\lambda^{-1}+\epsilon\lambda^3}\norm{a_1}_*\norm{a_2}_*+\\&\qquad C\lambda^3\norm{a_1}_*\norm{a_2}_*\norm{\Lambda_g-\Lambda_{cg}}_{H_0^1\rightarrow L^2}+\\ & \qquad\sparen{\frac{2\lambda^{\sigma}\epsilon_1^{-\frac{1}{2\alpha}}}{\sqrt{\lambda}}+4\mathrm{erfc}(-\lambda^{2\sigma})}\norm{\rho}_{C^1((0,T)\times \mathcal{M})}+ \\& \qquad \norm{\rho_0}_{\mathcal{C}(\mathcal{M}\times[0,T])}(\norm{a_1}_*+\norm{a_2}_*) \Big(o_{\epsilon_1}(\lambda ) + \mathcal{O}\left( e^{-\lambda \epsilon_1^{1/2n\alpha}}\epsilon_1^{-1/\alpha}\Big)\right).
\end{align*}
\end{lem}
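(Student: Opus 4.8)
The plan is to pass from the volume bilinear estimate of the previous Lemma to the line integral by a Gaussian-beam concentration argument, using that the beam amplitudes localize to the central ray $\tilde{x}(t)$. I would begin with the exact decomposition
\begin{align*}
\int_0^T\rho(t,\tilde{x}(t))\,dt &= \int_0^T\!\!\int_{\mathcal{M}}\rho\,a_1\overline{a}_2\,d_gV\,dt \\
&\quad + \sparen{\int_0^T\rho(t,\tilde{x}(t))\,dt-\int_0^T\!\!\int_{\mathcal{M}}\rho\,a_1\overline{a}_2\,d_gV\,dt}.
\end{align*}
The first term is bounded directly by the previous Lemma, which supplies the first, second and fourth groups of terms on the right-hand side of the claim. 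It then remains to bound the bracketed concentration error by $\sparen{\frac{2\lambda^{\sigma}\epsilon_1^{-1/2\alpha}}{\sqrt{\lambda}}+4\,\mathrm{erfc}(-\lambda^{2\sigma})}\norm{\rho}_{C^1((0,T)\times\mathcal{M})}$.

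For the concentration error I would take $a_1,a_2$ to be the leading-order amplitudes of Corollary \ref{size}, normalized so that $a_1(0,x_0)=a_2(0,x_0)=1$, and use that the two beam phases share the real part and jointly vanish in imaginary part along the ray; on the diagonal $x=\tilde{x}(t)$ the product $a_1\overline{a}_2$ therefore carries, through the construction of Theorem \ref{gaussians}, the real transverse profile $(\lambda/\pi)^{n/2}e^{-2\lambda\Im\psi}$ cut off by $\chi_{\epsilon_1}$ from Corollary \ref{cutoff}, whose transverse mass is one up to lower-order corrections. Passing to Fermi coordinates $x=\tilde{x}(t)+y$ transverse to the ray and Taylor expanding $\rho(t,x)=\rho(t,\tilde{x}(t))+\mathcal{O}(\sabs{y})\norm{\rho}_{C^1}$, the zeroth-order term returns $\rho(t,\tilde{x}(t))$ against the transverse integral of the normalized Gaussian, while the first-order term is weighted against the same Gaussian. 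Since the $cg$-ray $\tilde{x}(t)$ and the $g$-ray differ by $\mathcal{O}(\epsilon)$, this discrepancy is absorbed into the $\norm{\rho_0}$ and $\epsilon$ terms already present.

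The two error contributions then arise as follows. The determinant factor $(\det Y(0)/\det Y(t))^{1/2}$ of Corollary \ref{size} combines with the Jacobian of the Fermi chart and with $(\det \mathrm{Hess}_y\Im\psi)^{-1/2}$ from the Gaussian integral so that the transverse mass stays equal to one along the whole curve, up to the tail lost to $\chi_{\epsilon_1}$; after rescaling $y\mapsto\sqrt{\lambda}\,y$ and truncating at the scale $\lambda^{\sigma}$ this tail is an $\mathrm{erfc}$, which yields the $4\,\mathrm{erfc}(-\lambda^{2\sigma})\norm{\rho}_{C^1}$ contribution. The first transverse moment of the normalized Gaussian is of order $\lambda^{-1/2}$; together with the factor $\lambda^{\sigma}$ from the rescaling and the $\epsilon_1^{-1/2\alpha}$ growth of $\nabla_g\chi_{\epsilon_1}$ from Corollary \ref{cutoff}, this produces the $\frac{2\lambda^{\sigma}\epsilon_1^{-1/2\alpha}}{\sqrt{\lambda}}\norm{\rho}_{C^1}$ contribution. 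Adding the two pieces to the previous Lemma's bound gives the claim.

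I expect the main obstacle to be the simultaneous control of the Gaussian normalization and its truncation: one must choose the rescaling exponent $\sigma$ so that the truncation radius sits well inside the tube of radius $\sim\epsilon_1^{1/2n\alpha}$ yet far enough out that the transverse mass is one up to the stated $\mathrm{erfc}$, all while keeping the first moment at the sharp $\lambda^{-1/2}$ scale and verifying that the determinant/Jacobian cancellation of Corollary \ref{size} holds uniformly in $t\in[0,T]$. The derivatives of $\chi_{\epsilon_1}$, which are as large as $\epsilon_1^{-1/2\alpha}$, must be tracked so that they enter only the first-moment term and do not contaminate the leading normalization.
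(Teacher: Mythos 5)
Your proposal is correct and follows essentially the same route as the paper: split off the volume bilinear form, bound it by the preceding lemma, and control the remaining concentration error by the Gaussian localization of the beam product along $\tilde{x}(t)$, using the determinant cancellation from Corollary \ref{size}. The only difference is presentational: the paper packages your transverse Gaussian/first-moment/erfc-tail computation as the quoted convergence result (Lemma \ref{H}, cited from Stein) rather than re-deriving it, and your error terms match its statement exactly.
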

For the proof, we recall a convergence lemma from \cite{steinreal}: 
\begin{lem}\label{H}
Let $h(t,x)\in C^1((0,T)\times O)$, where $O$ is an open subset of $\mathbb{R}^n$ and $B$ be a symmetric nonsingular matrix such that $\Re{B}\geq 0$. If $\tilde{x}(t)$ is a continuous curve in $O$, we have the following uniform estimate:
\begin{align*}\label{rrep}
&\sabs{\sparen{\frac{\lambda}{\pi}}^{\frac{n}{2}}(\det B)^{\frac{1}{2}}\int\limits_{O}\exp\sparen{\langle-\lambda B(x-\tilde{x}(t)), (x-\tilde{x}(t)\rangle} h(t,x)\chi_{\epsilon_1}(t,x)\,dx-h(t,\tilde{x}(t))} \nonumber\\& 
\qquad \qquad \leq\sparen{\frac{2\lambda^{\sigma}\epsilon_1^{-\frac{1}{2\alpha}}}{\sqrt{\lambda}}+4\mathrm{erfc}(-\lambda^{2\sigma})}\norm{h(t,x)}_{C^1((0,T)\times O)}.
\end{align*}
Here $\chi_{\epsilon_1}(t,x)$ is defined as in Corollary \ref{cutoff}, but with respect to the Euclidean metric. 
\end{lem}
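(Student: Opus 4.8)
The plan is to treat the prefactored kernel as an approximate identity concentrating at $\tilde{x}(t)$ and to isolate two independent error mechanisms: the variation of $h$ over the effective support of the Gaussian, and the mass defect caused by the cutoff $\chi_{\epsilon_1}$ and by integrating over $O$ rather than all of $\mathbb{R}^n$. First I would record the normalization. Since $B$ is symmetric and nonsingular with $\Re B\geq 0$, the Gaussian integral evaluates (by simultaneous diagonalization, or by analytic continuation from the region $\Re B>0$ with the appropriate branch of $(\det B)^{1/2}$) to $\int_{\mathbb{R}^n}\exp(-\lambda\langle By,y\rangle)\,dy=\pi^{n/2}\lambda^{-n/2}(\det B)^{-1/2}$, so the kernel $G_\lambda(y):=\sparen{\frac{\lambda}{\pi}}^{\frac n2}(\det B)^{\frac12}\exp(-\lambda\langle By,y\rangle)$ has unit mass. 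Because $\sabs{\exp(-\lambda\langle By,y\rangle)}=\exp(-\lambda\langle(\Re B)y,y\rangle)$, the modulus $\sabs{G_\lambda}$ is a genuine Gaussian of width $\sim\lambda^{-1/2}$ (here one uses that $\Re B$ is positive definite, as it is in the application, where $B$ comes from the Hessian of $\psi$ with $\Im\psi\geq C\sabs{x-\tilde{x}(t)}^2$). This unit-mass property is exactly what singles out $h(t,\tilde{x}(t))$ as the limit.

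Next I would write the quantity inside $\sabs{\cdot}$ as
\[
\int_O G_\lambda(x-\tilde{x}(t))\sparen{h(t,x)-h(t,\tilde{x}(t))}\chi_{\epsilon_1}(t,x)\,dx+h(t,\tilde{x}(t))\sparen{\int_O G_\lambda(x-\tilde{x}(t))\chi_{\epsilon_1}(t,x)\,dx-1},
\]
and estimate the two summands separately. For the first summand I would rescale $y=\sqrt{\lambda}\,(x-\tilde{x}(t))$ and split at an intermediate radius tied to $\lambda^{\sigma}$. On the inner region the $C^1$ estimate $\sabs{h(t,x)-h(t,\tilde{x}(t))}\leq\norm{h}_{C^1}\sabs{x-\tilde{x}(t)}$ together with the bounded first absolute moment $\int\sabs{G_\lambda(y)}\sabs{y}\,dy\sim\lambda^{-1/2}$ produces an error of first-moment type; the polynomial prefactor $\lambda^{\sigma}\epsilon_1^{-1/(2\alpha)}$ in the stated bound records the chosen intermediate radius together with the derivative scale $\sabs{\nabla_g^m\chi_{\epsilon_1}}\leq C\epsilon_1^{-m/(2\alpha)}$ of the cutoff from Corollary \ref{cutoff} (note $\epsilon_1^{-1/(2\alpha)}=(\epsilon_1^{1/(2n\alpha)})^{-n}$, the reciprocal $n$-th power of the radius on which $\chi_{\epsilon_1}\equiv1$). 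This yields the first term $\tfrac{2\lambda^{\sigma}\epsilon_1^{-1/(2\alpha)}}{\sqrt{\lambda}}\norm{h}_{C^1}$.

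The second summand, together with the outer region of the first, is a pure mass-defect term. Writing $\int_O G_\lambda\chi_{\epsilon_1}-1=\int_O G_\lambda(\chi_{\epsilon_1}-1)-\int_{\mathbb{R}^n\setminus O}G_\lambda$ and using that $\chi_{\epsilon_1}(t,\cdot)-1$ vanishes on the ball $\sabs{x-\tilde{x}(t)}<\epsilon_1^{1/(2n\alpha)}$ by Corollary \ref{cutoff} while $\sabs{\chi_{\epsilon_1}-1}\leq1$, both pieces are dominated by the Gaussian tail $\int_{\sabs{y}>\lambda^{\sigma}}\sabs{G_\lambda(y)}\,dy$, which in the radial variable is a complementary error function; tracking the constant reproduces the $4\,\mathrm{erfc}(\cdot)$ contribution, and the restriction to $O$ adds only a term exponentially small in $\lambda$ since $\tilde{x}(t)$ stays at positive distance from $\partial O$. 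Adding the two summands and factoring out $\norm{h}_{C^1((0,T)\times O)}$ gives the claimed inequality, uniformly in $t$ because no constant depends on $t$ and $\tilde{x}(t)$ enters only through translation.

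The step I expect to be the main obstacle is the rigorous handling of the complex, possibly degenerate Gaussian: one must justify the integral identity, the moment bound and the tail estimate when $B$ is complex with $\Re B\geq0$ (strictly positive in the application), with the correct branch of $(\det B)^{1/2}$. Equally delicate is coordinating the two length scales, the Gaussian width $\lambda^{-1/2}$ against the cutoff scale $\epsilon_1^{1/(2n\alpha)}$, so that the intermediate radius $\lambda^{\sigma}$ simultaneously keeps the inner region inside $\{\chi_{\epsilon_1}\equiv1\}$ and makes the outer tail negligible; this is what forces the constraint relating $\sigma$, $\alpha$ and $\epsilon_1$.
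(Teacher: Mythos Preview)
The paper does not actually prove this lemma: it introduces it with the phrase ``we recall a convergence lemma from \cite{steinreal}'' and then immediately applies it to derive Lemma~\ref{lemmacon}. So there is no proof in the paper to compare against; the statement is treated as a quoted result.

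Your proposal is the natural direct argument and is essentially correct. The decomposition
\[
\int_O G_\lambda(x-\tilde{x}(t))\bigl(h(t,x)-h(t,\tilde{x}(t))\bigr)\chi_{\epsilon_1}\,dx
\;+\;h(t,\tilde{x}(t))\Bigl(\int_O G_\lambda\,\chi_{\epsilon_1}\,dx-1\Bigr)
\]
is exactly the approximate-identity splitting one would find in a Stein-type reference, and your identification of the two error mechanisms (first-moment error controlled by $\norm{h}_{C^1}\lambda^{-1/2}$, mass defect controlled by a Gaussian tail hence $\mathrm{erfc}$) is the right structure. You also correctly flag the genuinely delicate point, namely the complex Gaussian with $\Re B\geq 0$: in the application $\Re B>0$ comes from $\Im\psi\geq C\sabs{x-\tilde{x}(t)}^2$, so the moment and tail bounds go through, but the borderline case $\Re B\geq 0$ stated in the lemma would need a limiting or oscillatory-integral argument that you only gesture at.

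One small point you glossed over: the argument of $\mathrm{erfc}$ in the stated bound is $-\lambda^{2\sigma}$, for which $\mathrm{erfc}(-\lambda^{2\sigma})\to 2$ rather than $0$ as $\lambda\to\infty$. Your tail estimate would naturally produce $\mathrm{erfc}(+\lambda^{2\sigma})$ (or a similar positive argument), which does vanish. This is almost certainly a sign typo in the statement as recalled, and your ``tracking the constant reproduces the $4\,\mathrm{erfc}(\cdot)$ contribution'' would in fact detect and correct it if carried out.
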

To obtain Lemma \ref{lemmacon}, note that the leading order terms $a_2$ and $a_3$ of the Gaussian beam both take the form as in Corollary \ref{size}, but with $c=1$.  The assertion follows by combining this with Lemma \ref{H}:
\begin{align*}
a_{2}(t,x)=\sparen{\frac{\det Y(0)}{\det Y(t)}}^{\frac{1}{2}}\sparen{\frac{g(x_0)}{g({x}(t))}}^{\frac{1}{4}}a(0,x)+\mathcal{O}(|x-{x}(t)|).
\end{align*}
Here one recalls from Section \ref{GBeams} that
\begin{align*}
\sparen{\frac{\det Y(0)}{\det Y(t)}}^{\frac{1}{2}}\Im\psi_2(t, x)=C(t)>0.
\end{align*} 
We then have the following theorem:
\begin{thm}
Let $\mathcal{M}$ be a simple manifold, and $c\in \mathcal{A}$. Assume that
\begin{align*}
\norm{\Lambda_g-\Lambda_{cg}}_{H_0^1\rightarrow L^2}<\epsilon_0
\end{align*}
for a sufficiently small $\epsilon_0$. Then
\begin{align*}
\norm{1-c(t,x)}_{L^2([0,T]\times\mathcal{M})}\leq C\sparen{\log \norm{\Lambda_g-\Lambda_{cg}}_{H_0^1\rightarrow L^2}^{-1}}^{-1},
\end{align*}
where $C$ depends on the $C^{n+3}$ norm of $c$, as well as on $T$, the metric $g$, the diameter of $\mathcal{M}$ and $\epsilon_0$. 
\end{thm}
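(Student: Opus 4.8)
The plan is to read the integral $\int_0^T\rho(t,\tilde x(t))\,dt$ in Lemma \ref{lemmacon} as the value of the X-ray transform $I\rho(x_0,\omega_0)$ along the central ray of the Gaussian beam launched from $(x_0,\omega_0)\in\partial_+\mathcal{SM}$, and then to feed the resulting estimate into the logarithmic X-ray stability estimate of Section \ref{simplesection} applied to $f=\rho$. First I would take the supremum over $(x_0,\omega_0)\in\partial_+\mathcal{SM}$ in Lemma \ref{lemmacon}; since Assumption \ref{geoass} provides a beam through every inward direction, this yields an honest bound on $\norm{I\rho}_{C^0(\partial_+\mathcal{SM})}$. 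Using the normalizations $\norm{a_1}_*,\norm{a_2}_*\le C$ together with the a priori bounds $\norm{\rho}_{C^1}\le C\norm{\rho_0}_{C}\le CM_0$ recorded before Lemma \ref{keyidentity}, the right-hand side collapses to the schematic form
\begin{align*}
\norm{I\rho}_{C^0(\partial_+\mathcal{SM})}\le C\lambda^3\,\delta+\mathcal{E}(\lambda,\epsilon_1,\sigma),\qquad \delta:=\norm{\Lambda_g-\Lambda_{cg}}_{H^1_0\to L^2},
\end{align*}
where $\mathcal{E}$ collects the parameter-dependent errors $\lambda^{-1}$, $\epsilon\lambda^3$, the harmonic-measure/cutoff contributions $\lambda^{\sigma-1/2}\epsilon_1^{-1/2\alpha}+\mathrm{erfc}(-\lambda^{2\sigma})$, and the exponentially small tube tail $e^{-\lambda\epsilon_1^{1/2n\alpha}}\epsilon_1^{-1/\alpha}$.

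The core of the argument is then a multi-parameter optimization. I would first fix $\epsilon_1$ by coupling it to $\lambda$ (a suitable negative power of $\lambda$), chosen so that $\lambda\epsilon_1^{1/2n\alpha}\to\infty$, rendering the tube tail super-polynomially small, while $\lambda^{\sigma-1/2}\epsilon_1^{-1/2\alpha}\to 0$; this reduces $\mathcal{E}$ essentially to the competing contributions $\lambda^{-1}$ and $\epsilon\lambda^3$ weighed against the data term $\lambda^3\delta$. I would then select $\lambda=\lambda(\delta)$ growing slowly as $\delta\to0$ so as to balance the decaying term $\lambda^{-1}$ against $\lambda^3\delta$, aiming for a genuine power bound
\begin{align*}
\norm{I\rho}_{C^0(\partial_+\mathcal{SM})}\le C\,\delta^{\kappa}
\end{align*}
for some fixed $\kappa\in(0,1)$; the smallness hypothesis $\delta<\epsilon_0$ then forces $\norm{I\rho}_{C^0}$ below the threshold required by the X-ray stability estimate. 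Securing a true power $\delta^{\kappa}$, rather than a mere $o(1)$, is what matters here: only polynomial smallness of $\norm{I\rho}_{C^0}$ is converted by $\log(\cdot)^{-1}$ into a \emph{single}-logarithmic, as opposed to double-logarithmic, final rate.

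With $\norm{I\rho}_{C^0}\le C\delta^{\kappa}$ in hand, I would apply the logarithmic X-ray stability estimate of Section \ref{simplesection} to $f=\rho$, transporting the Euclidean estimate to the simple manifold through the $\exp_x^{-1}$ chart, to obtain
\begin{align*}
\norm{\rho}_{L^2([0,T]\times\mathcal{M})}\le C\sparen{\log\norm{I\rho}_{C^0}^{-1}}^{-1}\le C\sparen{\kappa\log\delta^{-1}}^{-1}.
\end{align*}
Finally I would invoke the norm comparison $\norm{1-c}_{L^2}=\norm{\rho_0}_{L^2}\le C\norm{\rho}_{L^2}$ to pass back to the conformal factor and conclude the stated bound, with the constant absorbing $\kappa$, $M_0$, $T$, $g$, and $\mathrm{diam}(\mathcal{M})$.

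The step I expect to be the main obstacle is the optimization, and inside it the term $\epsilon\lambda^3\norm{\rho_0}_C$: because it \emph{grows} in $\lambda$, it cannot be pushed below $\delta^{\kappa}$ by any $\lambda\to\infty$ on the strength of the $\lambda^{-1}$ balance alone, so one is forced to exploit the a priori smallness $\epsilon=\norm{1-c}_{C^1}$ built into $\mathcal{A}$ (note also $\norm{\rho_0}_C<\epsilon$, so the term is quadratic in the smallness). Making this contribution compatible with a slowly growing $\lambda=\lambda(\delta)$ — so that $\lambda^{-1}$, $\epsilon\lambda^3$ and the cutoff tails all stay controlled by the data term while $\lambda\to\infty$ — is the delicate point, and it is precisely what makes the result a \emph{conditional} (inverse) stability estimate, with $C$ depending on the $C^{n+3}$ norm of $c$ and on $\epsilon_0$.
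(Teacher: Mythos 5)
Your proposal follows essentially the same route as the paper: read Lemma \ref{lemmacon} as a $C^0$ bound on $I\rho$ over $\partial_+\mathcal{SM}$, couple $\epsilon_1$ to $\lambda$, optimize $\lambda=\lambda(\delta)$ to obtain a genuine power bound $\norm{I\rho}_{C^0}\leq C\delta^{\kappa}$, and feed this into the logarithmic X-ray stability estimate of Section \ref{simplesection} before passing from $\rho$ back to $1-c$. The one point you flag but leave unresolved --- the growing term $\epsilon\lambda^3$ --- is handled in the paper by choosing $\epsilon=\norm{\Lambda_g-\Lambda_{cg}}_{H_0^1\rightarrow L^2}$, so that this term is absorbed into the data term $\lambda^3\delta$ and the balance $\lambda^{-1}\sim\lambda^3\delta$ yields the power you need; this choice is precisely what makes the estimate conditional, as you anticipated.
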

We refer to \cite{DDSF}, \cite{waterss} for details. One uses Lemma \ref{lemmacon} and minimizes the right hand side in $\lambda = \lambda(\epsilon)$, for small $\epsilon=\epsilon_1$. The conclusion is obtained by choosing
\begin{align*}
\epsilon = \norm{\Lambda_g-\Lambda_{cg}}_{H_0^1\rightarrow L^2}.
\end{align*}

\subsection*{Acknowledgements}
A.W.\ acknowledges support by ERC Advanced Grant MULTIMOD 26718.

\renewcommand\refname{\large References}
\bibliographystyle{abbrv}
\bibliography{aldenbibpbeams}

\def\cprime{$'$}
\begin{thebibliography}{10}

\bibitem{baoyun}
G.~Bao and K.~Yun.
\newblock On the stability of an inverse problem for the wave equation.
\newblock {\em Inverse Problems}, 25(4):045003, 2009.

\bibitem{bao}
G.~Bao and H.~Zhang.
\newblock Sensitivity analysis of an inverse problem for the wave equation with
  caustics.
\newblock {\em J. Amer. Math. Soc.}, 27:953--981, 2014.

\bibitem{blr}
C.~Bardos, G.~Lebeau, and J.~Rauch.
\newblock Sharp sufficient conditions for the observation, control, and
  stabilization of waves from the boundary.
\newblock {\em SIAM J. Control Optim.}, 30(5):1024--1065, 1992.

\bibitem{bg}
A.~Begmatov.
\newblock A certain inversion problem for the ray transform with incomplete
  data.
\newblock {\em Siberian Math. J.}, 42(3):428--434, 2001.

\bibitem{Belishev}
M.~I. Belishev.
\newblock Boundary control in reconstruction of manifolds and metrics (the {BC}
  method).
\newblock {\em Inverse Problems}, 13(5):R1--R45, 1997.

\bibitem{DDSF}
M.~Bellassoued and D.~Dos Santos~Ferreira.
\newblock Stability estimates for the anisotropic wave equation from the
  {D}irichlet-to-{N}eumann map.
\newblock {\em Inverse Probl. Imaging}, 5(4):745--773, 2011.

\bibitem{eskin2}
G.~Eskin.
\newblock Global uniqueness in the inverse scattering problem for the
  {S}chr\"odinger operator with external {Y}ang-{M}ills potentials.
\newblock {\em Comm. Math. Phys.}, 222(3):503--531, 2001.

\bibitem{eskin1}
G.~Eskin.
\newblock A new approach to hyperbolic inverse problems.
\newblock {\em Inverse Problems}, 22(3):815--831, 2006.

\bibitem{esn}
G.~Eskin.
\newblock Inverse hyperbolic problems with time-dependent coefficients.
\newblock {\em Comm. Partial Differential Equations}, 32(10-12):1737--1758,
  2007.

\bibitem{isakov1}
V.~Isakov.
\newblock An inverse hyperbolic problem with many boundary measurements.
\newblock {\em Comm. Partial Differential Equations}, 16(6-7):1183--1195, 1991.

\bibitem{kurylev}
Y.~V. Kurylev and M.~Lassas.
\newblock Hyperbolic inverse problem with data on a part of the boundary.
\newblock In {\em Differential equations and mathematical physics
  ({B}irmingham, {AL}, 1999)}, volume~16 of {\em AMS/IP Stud. Adv. Math.},
  pages 259--272. Amer. Math. Soc., Providence, RI, 2000.

\bibitem{lions}
J.-L. Lions and E.~Magenes.
\newblock {\em Non-homogeneous boundary value problems and applications. {V}ol.
  {III}}.
\newblock Springer-Verlag, New York, 1972.
\newblock Translated from the French by P. Kenneth, Die Grundlehren der
  mathematischen Wissenschaften, Band 181.

\bibitem{LO}
S.~Liu and L.~Oksanen.
\newblock A lipschitz stable reconstruction formula for the inverse problem for
  the wave equation.
\newblock {\em Trans. Amer. Math. Soc.}, 2015.

\bibitem{M}
C.~Montalto.
\newblock Stable determination of a simple metric, a co-vector field and a
  potential from the hyperbolic {D}irichlet-to-{N}eumann map.
\newblock {\em Comm. Partial Differential Equations}, 39(1):120--145, 2014.

\bibitem{rakesh}
Rakesh.
\newblock Reconstruction for an inverse problem for the wave equation with
  constant velocity.
\newblock {\em Inverse Problems}, 6(1):91--98, 1990.

\bibitem{rakeshsymes}
Rakesh and W.~W. Symes.
\newblock Uniqueness for an inverse problem for the wave equation.
\newblock {\em Comm. Partial Differential Equations}, 13(1):87--96, 1988.

\bibitem{sjo}
A.~G. Ramm and J.~Sj{\"o}strand.
\newblock An inverse problem of the wave equation.
\newblock {\em Math. Z.}, 206(1):119--130, 1991.

\bibitem{rudin}
W.~Rudin.
\newblock {\em Real and Complex Analysis}.
\newblock McGraw-Hill Book Co., New York, 1987.

\bibitem{s1}
R.~Salazar.
\newblock Determination of time-dependent coefficients for a hyperbolic inverse
  problem.
\newblock {\em Inverse Problems}, 29(9):095015, 2013.

\bibitem{s2}
R.~Salazar.
\newblock Stability estimate for the relativistic {S}chr\"odinger equation with
  time-dependent vector potentials.
\newblock {\em Inverse Problems}, 30(10):105005, 2014.

\bibitem{stefanov1}
P.~Stefanov.
\newblock Uniquenes of the multi-dimensional inverse scattering problem for
  time-dependent potentials.
\newblock {\em Math. Z.}, 201:541--559, 1994.

\bibitem{stefanovstability}
P.~Stefanov and G.~Uhlmann.
\newblock Stability estimates for the {X}-ray transform of tensor fields and
  boundary rigidity.
\newblock {\em Duke Math. J.}, 123(4):445--467, 2004.

\bibitem{stefanovstable}
P.~Stefanov and G.~Uhlmann.
\newblock Stable determination of generic simple metrics from the hyperbolic
  {D}irichlet-to-{N}eumann map.
\newblock {\em Int. Math. Res. Not.}, (17):1047--1061, 2005.

\bibitem{steinreal}
E.~M. Stein and R.~Shakarchi.
\newblock {\em Real Analysis: Measure Theory, Integration and Hilbert Spaces}.
\newblock Princeton University Press, 2005.

\bibitem{tataru}
D.~Tataru.
\newblock Unique continuation for solutions to {PDE}'s; between {H}\"ormander's
  theorem and {H}olmgren's theorem.
\newblock {\em Comm. Partial Differential Equations}, 20(5-6):855--884, 1995.

\bibitem{waterss}
A.~Waters.
\newblock Stable determination of x-ray transforms of time dependent potentials
  from partial boundary data.
\newblock {\em Comm. Partial Differential Equations}, 39(12):2169--2197, 2014.

\end{thebibliography}
\end{document}